\documentclass[12pt]{article}
\usepackage{latexsym,amssymb,amsmath,amsfonts,amsthm,graphicx,url}
\numberwithin{equation}{section}
\newtheorem{theorem}{Theorem}[section]
\newtheorem{prop}{Proposition}[section]
\newtheorem{lemma}[theorem]{Lemma}

\theoremstyle{definition}

\def\beq{ \begin{equation} }
\def\eeq{ \end{equation} }
\def\mn{\medskip\noindent}
\def\ms{\medskip}

\def\bn{\bigskip\noindent}
\def\fns{\footnotesize}

\def\square{\vcenter{\vbox{\hrule height .4pt
  \hbox{\vrule width .4pt height 5pt \kern 5pt
        \vrule width .4pt} \hrule height .4pt}}}

\def\ep{\epsilon}

\def\ZZ{\mathbb{Z}}

\def\To{\Rightarrow}

\def\clearp{}

\begin{document}

\title{Continuous phase transitions in the $k$-creation\\
 process without stirring in $d=1$}
\author{Rick Durrett \\
\small James B. Duke Emeritus Professor of Math,
 Durham, NC 27705}

\date{\today}						

\maketitle

\begin{abstract}
This paper is a companion to one in which we prove there is a discontinuous phase transitions in the $k$-creation process with fast stirring in $d=1$ when $k \ge 2$. Dickman and Tom\'e (1991) introduced models on $\ZZ$ in which $k$ consecutive occupied sites give birth at rate $\lambda$ and  individual particles die at rate 1. Here, we show that without stirring the models with $k\ge 2$ have  qualitative properties much like the contact process, which is the case $k=1$. The critical value can be charracterized by the speed of interface when the process starts from the initial configuration $(-\infty,0]$. The process dies out at the critical value.  In the supercritical phase the complete convergence theorem holds which implies there is only one nontrivial stationary distribution, and convergence to the limit occurs exponentially rapidly. In the subcritical phase, the process dies out exponentially fast starting from any finite set.
\end{abstract}

\section{Introduction}

\mn
In the $k$-creation model in continuous time, if the configuration is $\xi \in \{0,1\}^\ZZ$ then  the death rate is $d(x,\xi)\equiv 1$ when $\xi(x)=1$ and the birth rate $b_k(x,\xi)$ is
$$
 (\lambda/2) \left(1_{(\xi(x+i)=1 \hbox{ \fns for all $1\le i  \le k$})} 
+ 1_{(\xi(x-i)=1 \hbox{ \fns for all $1\le i\le k$})} \right)
$$
when $\xi(x)=0$. When $k=1$ this is the basic contact process. Our focus here will be on the behavior of the process with $k\ge 2$. Generalizing Liggett's (1999)  proof of the Beuidenhout Grimmett (1990) theorem we prove that for any $k$,

\begin{theorem} \label{domOP}
Suppose there is positive probability the system survives starting from $\{1, \ldots k\}$. Then for any $p< 1$ the $k$-creation process, when viewed on suitable space and time scales, it dominates oriented percolation in which sites are independently open with probability $p$.
\end{theorem}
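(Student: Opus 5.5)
We follow the structure of Liggett's (1999) version of the Bezuidenhout--Grimmett argument, working in $d=1$ with the graphical representation. Each site carries a rate 1 Poisson process of deaths. Each site $x$ carries two rate $\lambda/2$ Poisson processes of birth arrows, one into $x$ from the block $\{x+1,\dots,x+k\}$ and one from $\{x-1,\dots,x-k\}$. A birth arrow is effective only if all $k$ sites of the source block are occupied. The process is attractive (monotone in the initial configuration), since $b_k$ is increasing in $\xi$. The process restricted to a space-time box depends only on the Poisson points in that box together with its boundary values, and monotonicity lets us replace the outside by zeros at the cost of getting a smaller process. This gives the finite-range, monotone, box-measurable structure the argument needs. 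The role of a single occupied site in the contact process is played by a fully occupied block of $k$ consecutive sites $I=\{1,\dots,k\}$, since only full blocks can reproduce.

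\textbf{Step 1: finite-box estimates.} Assume $P(\xi^I_t\neq\emptyset\ \forall t)>0$. The first claim is that survival forces many full $k$-blocks at large times: on survival, $\xi^I_t$ must contain at least $N$ disjoint full $k$-blocks for large $t$, a.s. The reason is that whenever there are fewer than $N$ full blocks, there is probability $\ge\delta(N)>0$ that within one unit of time every particle dies without new births. Hence, by the usual Lévy 0--1 argument, this situation occurs infinitely often only with probability zero. Next we localize. Take $L$ large and $T$ large. With probability $\ge 1-\epsilon$, the process started from $N$ full $k$-blocks inside $[-n,n]$ and run using only the graphical representation in $[-L,L]\times[0,T]$ produces either at least $N$ full $k$-blocks in $[L-... ,L]$ at some time $\le T$ (the "side" exit), or $N$ full blocks inside $[-L,L]$ at time $T$ (the "top" exit). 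Here $n$ is chosen so that, starting from $[-n,n]$ fully occupied, survival has probability $\ge 1-\epsilon^2$. Following Liggett, we obtain these through the monotone limits $L,T\to\infty$ and symmetry. This uses the fact that the process killed outside $[-L,L]$ increases to the full process, and that the counts of full blocks at the top and on the right edge (a time-integrated count, as in Liggett) are increasing in $L$ and $T$. The square-root trick then splits the probability between the top and the two sides, so that each piece is at least $1-\epsilon^{1/4}$.

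\textbf{Step 2: reaching a target box.} Combine the top and side estimates, each time restarting from $N$ full $k$-blocks. By a repositioning argument, with high probability a translate of $[-n,n]$ fully occupied can be found inside a prescribed box around $(a L, 5T)$ for $a\in[1,2]$, and likewise on the left. In detail: $N$ full blocks contain, with probability close to one, a fully occupied interval of length $2n+1$ after an $O(1)$ extra time. This follows by filling in with births from a full block, which has positive probability bounded below; taking $N$ large makes the failure probability small, using independence over disjoint blocks. This is the key place where $k\ge2$ differs from the contact process. Since a lone particle is useless, we must always carry full $k$-blocks, and "steering" must produce a full block of length $2n+1$ rather than a single site. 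Monotonicity handles this, because a fully occupied interval dominates anything smaller.

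\textbf{Step 3: comparison with oriented percolation.} Declare space-time site $(m,j)$ with $m+j$ even to be good if the event of Step 2, started from a translate of $[-n,n]$ fully occupied in the box centered at $(mL, jT')$, produces fully occupied translates in the boxes at $((m\pm1)L,(j+1)T')$. These events depend on the Poisson points in bounded space-time regions, so the resulting field is $M$-dependent with good-probability $\ge 1-\epsilon'$. The standard comparison (Durrett 1995, Theorem 4.3, or Liggett--Schonmann--Stacey) then shows that it dominates independent oriented percolation with any $p<1$ once $\epsilon'$ is small. I expect the main obstacle to be Step 1. There one must check that Liggett's monotonicity and symmetry bookkeeping goes through when the relevant "occupied objects" are full $k$-blocks, and in particular that side exits can be counted by blocks touching the boundary in a way that is increasing in the box and that still permits restarting.
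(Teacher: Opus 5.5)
Your route is the paper's: Liggett's version of the Bezuidenhout--Grimmett argument with occupied sites replaced by sites lying in full $k$-blocks (the paper's infectious set $\zeta_t$). It then uses top and side estimates via L\'evy's 0--1 law and the square-root trick, followed by the block comparison quoted from Liggett and Liggett--Schonmann--Stacey. But the justification you give in Step 1 does not work as written. Having fewer than $N$ full blocks places no bound on the particles outside full blocks. $\xi_t$ can contain arbitrarily many runs of length at most $k-1$, so the probability that \emph{every} particle dies within a unit of time with no births is not bounded below by any $\delta(N)$. The L\'evy argument therefore gets no uniform lower bound on $P(\Omega_0\mid{\cal F}_t)$. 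The same defect would recur in your top/side estimate: at time $T$ the box $[-L,L]$ can hold of order $L$ non-reproducing particles, which is not bounded as $L\to\infty$.

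The missing observation is what lets the paper substitute $\zeta_t$ for $\xi_t$ in Liggett's lemmas. A configuration with no $k$ consecutive occupied sites can never have another birth, and therefore dies out almost surely. So it suffices that the fewer than $2kN$ sites lying in full blocks are all killed before any birth occurs. That event has conditional probability bounded below by a constant depending only on $N$, $k$ and $\lambda$, however many other particles are present. Add the observation that escape from $[-L,L]$ requires a full block at the boundary, and you get the analogue of Lemma \ref{P2.8}.

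Two further inputs should be made explicit. First, the choice of $n$ with $P^{[-n,n]}(\Omega_\infty)\ge 1-\epsilon^2$ cannot come from duality here. The paper derives it (Lemma \ref{P2.1}) from attractiveness, since survival from any one of the disjoint $k$-blocks in $[-n,n]$ suffices, plus a 0--1 law for the Poisson field. Second, the square-root trick is Harris's positive-correlation inequality for events monotone in the graphical representation. It holds because every transition is up or down, i.e.\ because there is no stirring.
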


\mn
A precise statement can be found in Proposition \ref{sandt} below, with other more useful formulations in Propositions \ref{compOP0} and \ref{compOP2}.  The key to proving our resulst is to look at $\zeta_t =$ the sites in $\xi_t$ that can give birth, i.e. the $x\in \xi_t$ so that $\{x, x+1, \ldots x+k-1\} \subset \xi_t$ or $\{x-k+1, \ldots x-1, x\} \subset \xi_t$,
Once Theorem \ref{domOP} is proved, we immediately get three corollaries

\begin{theorem} \label{crdies}
The $k$-creation  process dies out at its critical value. 
\end{theorem}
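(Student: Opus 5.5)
The plan is the standard Bezuidenhout--Grimmett argument. The conclusion of Theorem \ref{domOP} is a statement about the process run for a \emph{finite} space--time box, so the event it describes is an open condition in $\lambda$, and that openness is exactly what rules out survival at $\lambda_c$.

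Argue by contradiction. Suppose that at $\lambda = \lambda_c$ the process started from $\{1,\ldots,k\}$ survives with positive probability. Apply Theorem \ref{domOP} in its precise form (Proposition \ref{sandt}) at $\lambda_c$. Its proof produces constants $L$, $T$, $n$ and a ``good'' event $G$ with the following properties:
\begin{itemize}
\item $G$ depends only on the graphical representation in a bounded space--time box $[-cL,cL]\times[0,cT]$.
\item $G$ says that, starting from any configuration containing a translate of the seed $\{1,\ldots,k\}$ (more precisely, $k$ consecutive occupied sites) in $[-L,L]$, at time $T$ there are fully occupied $k$-blocks in both $[L,3L]$ and $[-3L,-L]$.
\item $P_{\lambda_c}(G) > 1-\ep$, where $\ep$ is chosen small enough that the resulting $1$-dependent (or finite-range dependent) oriented percolation percolates.
\end{itemize}

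Because $G$ is determined by the finitely many Poisson processes in a bounded box, $P_\lambda(G)$ is continuous in $\lambda$. Couple all values of $\lambda$ through thinning of the birth arrows; then the probability that the box contains an arrow present at $\lambda_c$ but absent at $\lambda_c-\delta$ tends to $0$ as $\delta\to 0$. Hence $P_{\lambda_c-\delta}(G) > 1-\ep$ for some $\delta>0$.

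The comparison of Proposition \ref{compOP0} then applies at $\lambda_c-\delta$. Sites of the renormalized lattice are open when the corresponding translated copy of $G$ occurs. The resulting oriented percolation is $M$-dependent with open density above the threshold, so by the standard result (e.g. Durrett's 1984 Annals survey) it percolates with positive probability. An infinite open path yields an occupied $k$-block at arbitrarily late times. So the $k$-creation process started from $\{1,\ldots,k\}$ survives at $\lambda_c-\delta<\lambda_c$, contradicting the definition of $\lambda_c$. Therefore the process dies out at $\lambda_c$.

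The main obstacle is checking that the good event really is finite-box measurable and monotone in the needed sense for $k\ge 2$. The $k$-creation process is not additive. It is, however, attractive, since births need $k$ occupied neighbors and more particles can only help. So the event ``from any configuration containing a seed in $[-L,L]$ we produce seeds in the target intervals'' reduces to the event starting from a single seed. There are only finitely many seed positions in $[-L,L]$, so it is the intersection of finitely many events, each determined by the box. If Proposition \ref{sandt} is stated with restrictions such as the process being confined to the box (as in Liggett's version, where the process is killed outside a finite region), then finite-box measurability is immediate. In that case only the continuity in $\lambda$ and the translation of ``survival of the renormalized percolation'' into ``survival of $\xi_t$'' remain; the latter follows from monotonicity of $\xi_t$ in its initial condition.
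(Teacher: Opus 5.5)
Your overall route matches the paper's: a good event determined by a bounded box, continuity in $\lambda$, then comparison with oriented percolation at some $\lambda'<\lambda_c$. However, the good event $G$ you describe cannot have probability close to $1$, so the step $P_{\lambda_c}(G)>1-\ep$ fails.

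Suppose the seed is a block $\{x+1,\ldots,x+k\}$ of $k$ consecutive occupied sites. Its only possible births are at $x$ and $x+k+1$, at total rate $\lambda$, while deaths in the block occur at total rate $k$. With probability $k/(k+\lambda)$ a death comes first; after that no $k$ consecutive sites are occupied and the process is doomed. Hence $P_{\lambda_c}(G)\le \lambda_c/(k+\lambda_c)$ whatever $L$ and $T$ are. Intersecting over configurations containing the seed does not help, since by attractiveness that is the same as starting from the bare seed.

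The seeds must be fully occupied translates $x+[-n,n]$, with $n$ chosen via Lemma \ref{P2.1} so that $P^{[-n,n]}(\Omega_\infty)>1-\delta^2$. This is what the top, side and cover events of Propositions \ref{sandt} and \ref{compOP0} are built on. It, not box-measurability, is the real $k\ge 2$ issue: survival from a single $k$-block is bounded away from $1$. At the end you also need the easy fact that from $\{1,\ldots,k\}$ the process reaches a translate of $[-n,n]$ with positive probability.

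Second, even with large seeds, the proof of Proposition \ref{sandt} does not produce an event that succeeds simultaneously from \emph{every} seed position in $[-L,L]$ with probability $>1-\ep$. Its bounds hold for one seed at a time. Since $L$ grows as $\ep\downarrow 0$, a union bound over the order-$L$ positions gives nothing. Nor are the targets at a fixed time $T$: the side event \eqref{sideE} happens at a random time, which is why Propositions \ref{compOP0} and \ref{compOP2} have space--time target boxes. So an $M$-dependent field made of translates of one fixed event is not available.

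The fix is to apply your continuity argument to the single-seed events of Proposition \ref{sandt} (or \ref{compOP0}). These are determined by a bounded box and have probability $>1-\ep$ at $\lambda_c$, hence $>1-2\ep$ at some $\lambda'<\lambda_c$. Then, at $\lambda'$, pass to Proposition \ref{compOP2} exactly as in the paper. Its inductive coupling handles the random seed locations and times through conditional probabilities given the past, compared with independent percolation via Liggett--Schonmann--Stacey, rather than through a fixed intersection event. This is the argument the paper cites from Liggett, pp.~51--54.
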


\mn
Bezuidenhout and Gray (1994) have proved this result for general attraactive processes, but they assume that there is positive probability of survival from a singleton, which does not hold here when $k \ge 2$.

\begin{theorem} \label{supercr}
Suppose $\lambda> \lambda_c$ and let $\tau^A = \inf\{ t \ge 0: \xi^A_t =  \emptyset \}$ be the extinction time starting from $A$. Then there are constants $C_1$, $C_2$, $\gamma_1$ and $\gamma_2$ so that
\begin{align}
&P( t \le \tau^A < \infty)  \le C_1e^{-\gamma_1t},
\label{scexp1}\\
&P( \tau^A < \infty) \le C_2e^{-\gamma_2|A|}.
\label{scexp2}
\end{align}
\end{theorem}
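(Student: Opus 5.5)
The plan is to derive both bounds from the block construction of Theorem \ref{domOP}, in the form of Propositions \ref{compOP0} and \ref{compOP2}, together with standard contour estimates for supercritical oriented percolation, as in Durrett (1984) and Liggett's treatment of the contact process. Fix $\lambda>\lambda_c$. Then there is positive probability of survival from $\{1,\ldots,k\}$, so Theorem \ref{domOP} applies. For a given $p<1$, we choose the space scale $L$ and time scale $T$ so that the process dominates $1$-dependent oriented percolation with density $p$. Here a site $(m,n)$ is occupied when $\zeta_{nT}$, the set of sites able to give birth, contains a good translate of $\{1,\ldots,k\}$ (suitably fattened) inside the box $mL+[-L,L]$. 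We take $p$ close to $1$, so that the percolation has exponentially small probabilities for finite clusters of large size.

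\textbf{Step 1 (reaching a good box).} We show that from any nonempty $A$ there is a chance $\delta>0$, uniform in $A$, to create an occupied block by time $T'$. Starting from a single particle, with probability bounded below it produces $k$ consecutive particles through births, using only neighbors within distance $k$ of the block. It then produces a good block before any other event occurs. If this fails, the process at time $T'$ is either empty or nonempty again, so we can restart. By the Markov property, the number of attempts before success or extinction is dominated by a geometric random variable.

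\textbf{Step 2 (proof of \eqref{scexp1}).} On $\{t\le\tau^A<\infty\}$, the process survives until time $t$ and yet dies out. We split $[0,t]$ into attempts. Either fewer than $ct$ attempts have succeeded by time $t/2$, which has probability $e^{-\gamma t}$ by Step 1, or some success occurs. If a success occurs and the process nevertheless dies, the percolation cluster started from that occupied site is finite. There is one subtlety when a success is followed by death before time $t$: this event has probability bounded by $1-\rho<1$, where $\rho$ is the survival probability of the percolation, and we then restart. Doing this repeatedly gives a geometric tail in the number of restarts.

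The main obstacle is the surviving-but-late-dying part. We must show that if a percolation cluster from an occupied site reaches level $t/(2T)$ but is finite, this costs $e^{-\gamma t}$. We will use the standard contour (Peierls) argument for 1-dependent oriented percolation with $p$ near $1$. A finite cluster of height $n$ has a boundary contour of length at least $n$, and the number of such contours grows only exponentially in $n$. Each contour requires about $n/(\text{const})$ closed sites that are far enough apart to be independent. Hence for $p$ close enough to $1$ the probability is at most $C e^{-\gamma n}$. Transferring this back requires the comparison of Proposition \ref{compOP2}, which ensures that the events defining occupied sites depend only on the graphical representation in bounded space-time boxes.

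\textbf{Step 3 (proof of \eqref{scexp2}).} Given $A$ with $|A|=n$, we select $\ge n/(2k+1)$ points of $A$ separated by more than $2k$. From each such point we run an independent attempt, as in Step 1, using disjoint space-time regions of the graphical representation over a fixed time $T'$. This yields, except on an event of probability $e^{-cn}$, at least $c'n$ occupied percolation sites at level $0$, with spacing controlled. Extinction then requires all percolation clusters from these sites to be finite. By the contour argument, this has probability at most $e^{-\gamma' c'n}$: a single contour must enclose all the sites, so its length is at least of order $c'n$. If the selected points are spread over a large region, an alternative is to use independence and the bound $(1-\rho)^{c'n}$ for well-separated starts. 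Combining the two error terms gives \eqref{scexp2}.
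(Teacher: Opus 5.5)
Your overall strategy (seed the block construction of Proposition \ref{compOP2}, use exponential estimates for oriented percolation with $p$ near $1$, and restart after failures) is the one the paper invokes by citing Liggett's Theorem 2.30. However, the step where the $k\ge 2$ model differs from the contact process, your Step 1, is false. For $k\ge 2$ a birth at $x$ requires $k$ consecutive occupied sites adjacent to $x$. So a single particle, or more generally any configuration with $\zeta=\emptyset$, can never give birth. The set $\zeta=\emptyset$ is absorbing, and the remaining particles simply die at rate $1$ each. This is the same fact the paper points to when it remarks that survival from a singleton fails for $k\ge 2$. It breaks your argument in three places:
\begin{itemize}
\item there is no $\delta>0$ uniform over nonempty $A$;
\item the restart in Step 2 fails, because $\xi_{T'}$ can be nonempty while $\zeta_{T'}=\emptyset$;
\item Step 3 selects exactly the wrong objects, since well-separated single points of $A$ are inert.
\end{itemize}
In fact the bounds cannot hold as you read them. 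For $A=\{0,2,\ldots,2(n-1)\}$ no birth ever occurs, so $P(\tau^A<\infty)=1$, which contradicts \eqref{scexp2} for large $n$. Moreover $P(t\le\tau^A<\infty)=1-(1-e^{-t})^n\to 1$ as $n\to\infty$ for each fixed $t$, so \eqref{scexp1} cannot hold with $C_1$ independent of $A$.

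To get a correct argument you must work with the infectious sites. In Step 1, start only from configurations with $\zeta\neq\emptyset$. A run of $k$ occupied sites can extend itself using the arrows at its two ends. Hence, with probability bounded below, it becomes a fully occupied translate of $[-n,n]$ within unit time, using only Poisson marks in a bounded neighborhood and attractiveness to ignore the rest of the configuration. That translate seeds Proposition \ref{compOP2}. Your restart and contour arguments then give an exponential tail, uniform in $A$, for the extinction time $\sigma^A$ of $\zeta^A$, not of $\xi^A$. Passing from $\sigma^A$ to $\tau^A$ needs a separate estimate on the non-infectious particles still alive at time $\sigma^A$. After that time they die independently at rate $1$, and controlling their number necessarily brings in a constant depending on $A$ (e.g.\ on $|A|$). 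For \eqref{scexp2}, $|A|$ has to be replaced by $|\zeta^A_0|$, or equivalently the number of disjoint $k$-runs in $A$, which is at least $|\zeta^A_0|/(2k-1)$. The independent attempts in Step 3 must then be launched from well-separated disjoint $k$-runs rather than single points. With that change your contour bound goes through.
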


\mn
Liggett (1999) also proves the complete convergence theorem for the contact process (see Theorem \ref{cct} below for the statement) but his proof requires that the contact process is an additive process, i.e., the process can be built on a special {\bf graphical representation} and hence has a set-valued dual process $\xi^A_t$ which satisfies
$$
P( \xi^A_t \cap B \neq \emptyset ) = P( A \cap \zeta^B_t \neq \emptyset ) .
$$
The name {\bf additive process} comes from the fact that when built on this graphical representation
$$
\xi^{A \cup B}_t = \xi^A_t \cup \xi^B_t .
$$
Letting $1\le i <k$ and taking $A=\{1, \ldots i\}$ and $B=\{i+1, \ldots k\}$ shows that the $k$-creation process with $k\ge 2$ is not additive, since births cannot occur in $\xi^A_t$ or $\xi^B_t$ but they can in $\xi^{A \cup B}_t$. For more about additive processes, see Harris (1976), Griffeath (1979), or Section 1.3 in Durrett (2027).

To prove the complete convergence theorem  for the $k$-creation model we will use coupling. The first step is Theorem 2.1 in Durrett (1980), which holds for a class of finite range growth models described in the paper. Let $\bar r_t = \max \xi^{(-\infty,0]}_t$.

\begin{theorem} \label{edgesp}
If $\lambda > \lambda_c$ then there is a constant $\alpha(\lambda)>0$ so that as $t\to\infty$ the right edge $\bar r_t/t \to \alpha(\lambda)$ almost surely.
\end{theorem}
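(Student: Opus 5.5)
The plan has two stages. First, subadditivity gives the almost sure limit. Second, the comparison with oriented percolation in Theorem \ref{domOP} shows that the limit is strictly positive when $\lambda>\lambda_c$.

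\emph{Existence of the limit.} Build all the processes on one graphical representation: Poisson death marks at rate 1 at each site, and birth arrows of rate $\lambda/2$ in each direction, each carrying the "$k$ consecutive occupied" test. The $k$-creation process is attractive, i.e.\ monotone in the initial configuration. Translation invariance of $(-\infty,0]$ plus attractiveness give the usual comparison: at time $s$ we have $\xi^{(-\infty,0]}_s \subset (-\infty,\bar r_s]$, so restarting from $(-\infty,\bar r_s]$ at time $s$ dominates the process on $[s,s+t]$. This yields
$$
\bar r_{s+t} \le \bar r_s + \bar r'_{s,t},
$$
where $\bar r'_{s,t}$ is the right edge, at time $t$, of the process started from $(-\infty,0]$ at time $s$ and shifted by $\bar r_s$. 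Because of the shift, the family $X_{s,t}$ satisfies the hypotheses of Liggett's subadditive ergodic theorem, in the form used in Durrett (1980).

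The moment conditions also hold. We have $\bar r_t \le$ a rate-$\lambda/2$ Poisson count, so $E\bar r_t^+ \le C t$. For the lower bound, the process started from $(-\infty,0]$ keeps sites arbitrarily far to the left alive, and the rightmost particle moves left only through deaths. A standard estimate, namely that the right edge drops by more than $ct$ with exponentially small probability for large $c$, gives $E\bar r_t \ge -Ct$. Kingman–Liggett then gives $\bar r_t/t \to \alpha(\lambda)=\inf_t E\bar r_t/t\in[-\infty,\infty)$ almost surely, and the bound $\bar r_t \ge -Ct$ shows $\alpha>-\infty$. This is exactly the argument of Theorem 2.1 of Durrett (1980), which applies to finite range attractive growth models, so I would simply verify that the $k$-creation process belongs to that class.

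\emph{Positivity of $\alpha(\lambda)$.} This is the main step. For $\lambda>\lambda_c$ the process survives from $\{1,\ldots,k\}$, so Theorem \ref{domOP} (in the form of Proposition \ref{compOP0}) applies. Choose $L,T$ so that the process, viewed in boxes of size $L$ in space and $T$ in time, dominates oriented percolation with density $p$ close to 1. A good box contains $k$ consecutive occupied sites in a controlled region.

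Start from $(-\infty,0]$, which contains occupied blocks in every box to the left of the origin. The percolation cluster started from the half-line $(-\infty,0]\cap L\ZZ$ has right edge $r_n$ with $r_n/n\to\alpha_p>0$ for $p$ near 1; this is Durrett (1984) on one-sided edges in oriented percolation. Mapped back, $\bar r_{nT} \ge L r_n - O(L)$, so $\liminf \bar r_t/t \ge L\alpha_p/T>0$. The gaps between the times $nT$ are handled by the linear lower bound from the first part.

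The obstacle I expect is the block construction. The comparison in Proposition \ref{sandt} is stated for a single initial seed, while we need it for a whole half-line of seeds at once, with the dependence between neighboring boxes controlled (a $k$-dependent percolation, then Liggett–Schonmann–Stacey domination). I would handle this by noting that the block events are monotone and are built from the graphical representation in disjoint space-time regions. Attractiveness then lets us start every good box from its own seed and take unions, which is legitimate because larger initial configurations dominate.
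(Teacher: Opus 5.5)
Your positivity step matches the paper's. The obstacle you anticipate is already handled by Proposition \ref{compOP2}, which couples the process started from any union of seeds, such as a half-line, with oriented percolation. The subadditivity and stationarity hypotheses of Liggett's theorem also hold for your family, since restarting from the true edge at time $m$ makes the increments $X_{(j-1)M,jM}$ i.i.d.

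The gap is the remaining hypothesis, $E\bar r_t\ge -Ct$. Your justification does not work: when the rightmost particle dies, the edge jumps to the next occupied site, and nothing controls how far away that is. In fact the bound is false in general, and you never use $\lambda>\lambda_c$ in this step. For small $\lambda$ (more generally whenever $\alpha(\lambda)<0$), blocks die out exponentially fast. The rightmost survivor of $(-\infty,0]$ is then at distance of order $e^{ct}$, and $\alpha(\lambda)=-\infty$, as Theorem \ref{expsub} records. This is why your sentence concluding $\alpha\in[-\infty,\infty)$ and then $\alpha>-\infty$ from the same bound is suspect. For $\lambda>\lambda_c$ the bound is true, but only as a consequence of the block construction. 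Deriving it directly would need $L^1$ control of $\bar r_t^-$, i.e.\ tail bounds for the percolation edge started from a half-line, which you have not supplied.

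The paper, following Durrett (1980), is arranged so that no lower bound is ever needed. The reset approximation $\xi^{R,M}_t$ dominates $\xi^R_t$ and has i.i.d.\ increments with $E\bar r_M^+<\infty$. The strong law (valid even when the mean is $-\infty$), together with a Poisson bound on the movement between multiples of $M$, then gives $\limsup_t \bar r_t/t\le\alpha:=\inf_M E\bar r_M/M$. If $\alpha=-\infty$ there is nothing more to prove. If $\alpha>-\infty$, then $E\bar r_M\ge\alpha M$ for every $M$ by definition, which is exactly your missing hypothesis.

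The cleanest repair of your argument has three steps:
\begin{enumerate}
\item Prove this upper bound first.
\item Use the percolation comparison to get $\liminf_k \bar r_{t_k}/t_k>0$ a.s.\ along the times $t_k\in[5kb,(5k+1)b]$ supplied by Proposition \ref{compOP2}. This forces $\alpha>0$.
\item Only then invoke Liggett's theorem.
\end{enumerate}
Once you have almost sure convergence along all $t$, you also need not treat the gaps between the times $nT$. An expectation bound could not control those in an almost sure statement anyway.
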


\mn
The existence of the limit follows from subadditivity (and does not require $\lambda>\lambda_c$). The positivity of the constant follows from Theorem \ref{domOP}. 
Conversely, we have

\mn
\begin{theorem} \label{zeroatcr}
$\alpha(\lambda_c)=0$, and $\alpha(\lambda) \to 0$ as $\lambda \downarrow \lambda_c$. 
\end{theorem}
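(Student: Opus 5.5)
The plan is to prove the contrapositive: if $\alpha(\lambda)>0$, then the process started from a finite set survives with positive probability, so $\lambda>\lambda_c$, and moreover this persists for all nearby smaller $\lambda$. Combined with Theorem \ref{crdies}, this gives $\alpha(\lambda_c)=0$. Since $\lambda \mapsto \alpha(\lambda)$ is nondecreasing by attractiveness (monotone coupling in $\lambda$), we get $\alpha(\lambda)\downarrow$ some limit $\alpha_+\ge 0$ as $\lambda\downarrow\lambda_c$. We then show that $\alpha_+>0$ is impossible.

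The key tool is a finite-space, finite-time criterion, in the spirit of Bezuidenhout--Grimmett/Liggett, expressed through the edge. Since $\bar r_t/t=\inf_t E\bar r_t/t$ by subadditivity, $\alpha(\lambda)=\inf_t E\bar r_t/t$ is an infimum of continuous functions of $\lambda$, hence upper semicontinuous. Upper semicontinuity by itself gives the wrong direction, so the real work is this. If $\alpha(\lambda)>0$, we will find $L,T$ such that an event $G$ depending only on the graphical representation in a box $[-CL,CL]\times[0,T]$ has probability $>1-\epsilon$. The event $G$ says that starting from $k$ consecutive occupied sites (a ``seed'' $\{x,\dots,x+k-1\}$) in $[-L,L]$, the process restricted to the box produces seeds in both $[L,3L]$ and $[-3L,-L]$ at time $T$. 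This yields domination of oriented percolation with $p$ close to 1, hence survival from a finite seed, which is the content of the comparison Propositions referred to after Theorem \ref{domOP}. Because $G$ is a finite-box event, its probability is continuous in $\lambda$, so the inequality persists for $\lambda'$ slightly smaller than $\lambda$. This gives $\lambda'>\lambda_c$ strictly, so $\lambda$ cannot be $\lambda_c$ or below, and $\alpha(\lambda_c)=0$.

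To build $G$ from $\alpha>0$: by the edge speed, the process from $(-\infty,0]$ has its right edge at about $\alpha t$. Using reflection symmetry and a left edge at $-\alpha t$, one argues, as in Durrett (1980), that the process from a large block $[-L,L]$ agrees with $\xi^{(-\infty,L]}_t$ near its right edge and with $\xi^{[-L,\infty)}_t$ near its left edge, as long as it survives. What must be shown is that, with high probability, a large fully occupied block of length $\ge L$, or at least a seed, is present near $\pm\alpha T$. For the conversion from "edge far out" to "seed far out", note that the rightmost particle of $\xi^{(-\infty,0]}$ advances only via births from seeds adjacent to it, so a seed exists within $k$ of the edge at birth times. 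Then finite speed of propagation (comparison with a Poisson process) localizes everything to the box $[-CL,CL]$ with $C$ depending on $\lambda$.

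\textbf{Main obstacle.} The hardest step is replacing the half-line initial condition $(-\infty,0]$ by a finite seed. Survival from a seed is not implied directly by $\alpha>0$, and this is exactly where non-additivity hurts: we cannot use duality to say the finite process coincides with the half-line process on $[\ell_t,r_t]$. I would first try a block construction. Start with the whole interval $[-L,L]$ occupied; by the edge speed and the comparison to $\xi^{(-\infty,L]}$, the process from $[-L,L]$ keeps its edges moving outward at rate about $\alpha$ with probability $\to 1$ as $L\to\infty$. Then show, by a renewal argument on the edge, that near each edge a fully occupied block of length $L$ is regenerated with high probability by time $T$. Only if this coupling fails would I fall back on a monotone, though not additive, comparison in which the finite process dominates the half-line process shifted and killed outside a growing cone.
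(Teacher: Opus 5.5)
Your proposal does not prove the second assertion, because you dismiss the one tool that gives it. Upper semicontinuity points in exactly the right direction here. Since $\alpha(\lambda)=\inf_M E\bar r_M/M$ is an infimum of functions continuous in $\lambda$ (the paper uses $a_m(\lambda)=\min_{M\le m}E\bar r_M/M$, which decreases to $\alpha(\lambda)$), you get $\limsup_{\lambda\downarrow\lambda_c}\alpha(\lambda)\le\alpha(\lambda_c)$. Monotonicity gives $\alpha(\lambda)\ge\alpha(\lambda_c)$ for $\lambda>\lambda_c$. So $\alpha$ is right-continuous and $\alpha_+=\alpha(\lambda_c)$; that is the paper's whole proof of the second claim. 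Via Theorem \ref{edgesp} it also gives $\alpha(\lambda_c)=\alpha_+\ge 0$, a lower bound your ``$\alpha(\lambda_c)=0$'' needs but never supplies. The openness statement you aim for instead (if $\alpha(\lambda)>0$ then $\lambda>\lambda_c$) only gives $\alpha(\lambda_c)\le 0$. It is consistent with a jump from $\alpha(\lambda_c)=0$ to $\alpha_+>0$, so the promised ``$\alpha_+>0$ is impossible'' is never shown.

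For the first assertion, the step you call the main obstacle is left open, and the event $G$ as defined cannot work. Started from a single seed of $k$ sites, the first event is a death with probability at least $k/(k+\lambda)$, after which no birth is ever possible. So $P(G)\le \lambda/(k+\lambda)$ for every choice of $L,T$.

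Started from a full block $[-L,L]$, you would have to regenerate full blocks of length $L$ near $\pm\alpha T$ with $T\asymp L/\alpha$. But a site $x$ is vacated by the marks at $x$ alone (a death mark in $[T-1,T]$ followed by no birth arrow into $x$ before $T$) with probability $c>0$, independently over sites. Hence a full $L$-block at a given place has probability at most $(1-c)^L$. The Bezuidenhout--Grimmett way around this needs a fixed $n$ with $P^{[-n,n]}(\Omega_\infty)$ near 1, which is the survival you are trying to derive.

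The perturbation in $\lambda$ is also unnecessary. Theorem \ref{crdies} already gives extinction at $\lambda_c$, so it suffices to show that $\alpha(\lambda_c)>0$ forces survival at $\lambda_c$. The paper argues this directly:
\begin{itemize}
\item $\alpha(\lambda_c)>0$ gives $\bar r_t\to\infty$, hence $V=\inf_t\bar r_t>-\infty$ a.s.
\item Pick $v$ with $P(V\ge -v)>0$; by reflection, $\sup_t\bar\ell_t$ has the law of $-V$.
\item From $[-2v,2v]$ the two edges therefore stay apart forever with positive probability, so the process survives, contradicting Theorem \ref{crdies}.
\end{itemize}
Your non-additivity concern is legitimate but bears only on the last implication (non-crossing edges $\Rightarrow$ survival of the finite process), which the paper treats as immediate. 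For $k\ge 2$ that is the place to bring in the coupling of Lemma \ref{kcouple} with the infectious edges $\tilde r_t,\tilde\ell_t$, not a reason to rebuild a block construction.
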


\mn
Before we turn to the proof of the complete convergence theorem, we have one last corollary of ``edge speeds characterize the critical value.''

\begin{theorem} \label{expsub}
If $\alpha(\lambda)<0$ and $I=[1,k]$ the there is a constant $\gamma>0$ so that $P( \xi^I_t \neq \emptyset ) \le C \exp^{-\gamma t}$, and hence $\alpha(\lambda)=-\infty$.
\end{theorem}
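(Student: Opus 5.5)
Since $\alpha(\lambda)<0$, the right edge $\bar r_t$ of $\xi^{(-\infty,0]}_t$ drifts to $-\infty$ linearly. By reflection symmetry the left edge $\bar\ell_t=\min\xi^{[0,\infty)}_t$ moves right at rate $|\alpha|$. The plan is to trap $\xi^I_t$ between these two edges and use large deviation bounds for the edges.

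\textbf{Step 1 (sandwich).} The $k$-creation process is attractive, so in the basic graphical coupling
$$\xi^I_t\subset \xi^{(-\infty,k]}_t\cap\xi^{[1,\infty)}_t .$$
Thus $\xi^I_t\neq\emptyset$ forces
$$\min \xi^{[1,\infty)}_t\le \max\xi^{(-\infty,k]}_t .$$
By translation invariance the right side is $k+\bar r_t$ in distribution, and the left side is $1+\bar\ell_t$ with $\bar\ell_t\stackrel{d}{=}-\bar r_t$. It therefore suffices to show that for $\epsilon=|\alpha|/2$
$$P(\bar r_t\ge -\epsilon t)\le Ce^{-\gamma t}.$$
A union bound then gives $P(\xi^I_t\neq\emptyset)\le P(\bar r_t\ge -\epsilon t-k)+P(\bar\ell_t\le \epsilon t+1)$, which is exponentially small.

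\textbf{Step 2 (exponential bound for the edge).} This is the main obstacle. I would use the subadditive structure behind Theorem~\ref{edgesp}. By attractiveness and the graphical representation, $\bar r_{s+t}\le \bar r_s+\bar r'_t$, where $\bar r'_t$ is the right edge of the process restarted at time $s$ from $(-\infty,\bar r_s]$. The copy $\bar r'_t$ is independent of $\mathcal F_s$ and has the law of $\bar r_t$.

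First choose $T$ with $E\bar r_T\le -2\epsilon T$; this is possible because $\bar r_t/t\to\alpha$ in $L^1$. Iterating the subadditive inequality over blocks of length $T$ gives
$$\bar r_{nT}\le \sum_{j=1}^n X_j,$$
with $X_j$ i.i.d.\ copies of $\bar r_T$. The $X_j$ have exponential upper tails, since $\bar r_T$ is dominated by a Poisson process of rate $\lambda$ (births occur at most one site beyond the edge). Cramér's theorem then gives $P(\bar r_{nT}\ge -\epsilon nT)\le e^{-\gamma n}$.

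To pass to intermediate times, bound the increase of $\bar r$ on $[nT,(n+1)T]$ by another Poisson variable. If the $L^1$ convergence is not immediately available, use instead the lower tail: $\bar r_t\ge -N_t$, with $N_t$ the number of deaths at the edge, which is Poisson. Together with the a.s.\ convergence this gives uniform integrability.

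\textbf{Step 3 ($\alpha=-\infty$).} With exponential decay of survival from $I$ in hand, consider $\xi^{(-\infty,0]}_t$. For $\bar r_t\ge -Mt$ there must be an occupied site at time $t$ in $[-Mt,0]$. Any such site descends, through the graphical representation, from a block that can give birth. A particle far to the left, at distance $\gg t$, cannot reach $[-Mt,\infty)$ by time $t$ without a Poisson-rare chain of births. For sites in the window of order $Mt$, survival of the relevant local structure is exponentially unlikely. Here I would use that the restriction of $\xi^{(-\infty,0]}$ to particles whose ancestry started in a window behaves like finitely many copies of $\xi^I$, or more simply the known fact that the edge speed equals $\lim -\frac1t\log P(\cdot)$-type quantities.

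Summing over positions, $P(\bar r_t\ge -Mt)\le Ct\,Me^{-\gamma t}+e^{-ct}$, which tends to $0$ for every $M$. Hence $\bar r_t/t\to-\infty$. I expect this last reduction to require some care, since the process is not additive and one cannot simply union over singletons; comparison via attractiveness from the full half-line only gives upper bounds, so it needs an ancestry argument.
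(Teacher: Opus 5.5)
Your Steps 1 and 2 are essentially the paper's proof. Restarting from $(-\infty,\bar r_s]$ at times $T,2T,\dots$ is exactly Durrett's reset approximation $\bar r^M$. The paper likewise uses i.i.d.\ increments, exponential moments from Poisson domination of forward jumps, a large-deviation bound at times $kM$, and traps the process between $\bar\ell$ and $\bar r$.

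Three small repairs are needed:
\begin{itemize}
\item Since $\alpha=\inf_M E\bar r_M/M$ by definition, subadditivity gives $E\bar r_T\ge \alpha T$ for all $T$. So you cannot in general find $T$ with $E\bar r_T\le -2\epsilon T=\alpha T$. Take $T$ with $E\bar r_T\le -\tfrac32\epsilon T$ instead, which the definition of $\alpha$ supplies directly. No $L^1$ convergence or uniform integrability is needed, and if $\alpha=-\infty$ you can use any $\epsilon>0$.
\item Your fallback bound $\bar r_t\ge -N_t$ with $N_t$ Poisson is false. After the edge particle dies, the edge jumps to the next occupied site, which can be far to the left. A correct bound is the nearest $x\le 0$ with no death mark in $[0,t]$.
\item Intermediate times need no treatment, because $\{\xi^I_t\neq\emptyset\}$ is decreasing in $t$.
\end{itemize}

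The genuine gap is Step 3, the clause $\alpha(\lambda)=-\infty$. Splitting $\xi^{(-\infty,0]}_t$ by ancestry into copies of $\xi^I$ is exactly what non-additivity rules out. A $k$-block can be assembled from descendants of different initial windows and produce births that none of the separate processes would make. So exponential extinction from $I$ does not by itself control the bulk, and the ``known fact'' about $-\frac1t\log P(\cdot)$ is not available. The paper's own proof also stops after the exponential bound and gives no argument for this clause.

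One way to close it is to first prove $P(0\in\xi^{\ZZ}_t)\le Ce^{-\gamma' t}$ by a vacant-bubble argument:
\begin{itemize}
\item In any unit time interval, with probability $q>0$ not depending on the configuration, every site of $[-m,m]$ gets a death mark and no birth arrow, so $[-m,m]$ becomes vacant.
\item Afterwards, by attractiveness, $\xi^{\ZZ}$ is dominated by the process started from $\ZZ\setminus[-m,m]$. That process coincides with the union of the processes started from $(-\infty,-m-1]$ and $[m+1,\infty)$ as long as their edges stay on their own side of $0$, because every occupied $k$-block then lies inside one of them.
\item Your Step 2 Chernoff bound (with $Ee^{\theta X}<1$) shows this fails with probability $O(e^{-\theta m})$, and fails at a late time only with exponentially small probability.
\item Repeated attempts, using the strong Markov property, then give exponential decay.
\end{itemize}
Finally, by attractiveness and translation invariance, $P(\bar r_t\ge -Mt)\le ((M+K)t+1)\,P(0\in\xi^{\ZZ}_t)+P(\bar r_t>Kt)\to 0$ for each $M$, with $K$ large. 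Together with $E\bar r_t^+\le \lambda t/2$ this gives $E\bar r_t/t\to-\infty$, i.e.\ $\alpha(\lambda)=-\infty$.
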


\mn
Although we have not given all the necessary definitions: if $E\bar r^M_M<0$ and $E\bar \ell^M_M>0$ the process dies out exponentially fast since $\bar r_t < \bar \ell_t$ implies that the proces has died out.

Careful readers will note that we have not ruled out the possibility that $\alpha(\lambda)=0$ on an interval of positive length. In the contact process or more generally additive processes Lemma 4.1 in Durrett (1980) implies that if $B \subset (-1,\infty]$ then
\beq
E(r^{B \cup \{0\}}_t - r^B_t) \ge E(r^{(-\infty,0]} - r^{(-\infty,1]} \ge 1
\label{Lig}
\eeq
which leads to  $\alpha(\lambda+\delta)-\alpha(\lambda) \ge \delta t$. Not only doies the proof of \eqref{Lig} breakdown when $k \ge 2$ but the result is false. if we let $B=(-\infty,-10]$ then it is unlikely that process catches up with the one at 0 before it dies when $\alpha(\lambda)$ is small.

\medskip
{\bf Complete convergence theorem.} To motivate the coupling proof for the $k$-creation model, we recall the proof for  contact process given in Durrett (1990).  Let $\xi^0_t$ be the process with $\xi^0_0 = \{0\}$, let $\xi^1_t $ be the process with $\xi^1_0 = \ZZ$  and let $\bar\ell_t = \min \xi_t^{[0,\infty)}$. Then when the processes are defined on the same space using the graphical representation 
\beq
\xi^0_t = \xi^1_t \cap [\bar\ell_t, \bar r_t] \quad\hbox{on $\xi^0_t \neq\emptyset$}
\label{ccouple}
\eeq
To give this a chance of holding when $k\ge 2$ we need to replace 0 by an interval $I=[a,b]$ with $b-a \ge k-1$ and modify the definitions of the edge processes to be  $\bar r_t = \max \xi^{(-\infty,b]}_t$ and $\bar\ell_t = \min \xi_t^{[a,\infty)}$ but even if we do this then the coupling fails to satisfy \eqref{ccouple}. Consider for simplicity the case $k=2$ and look at the following situation

\begin{center}
\begin{tabular}{cccc}
& $m-1$ & $\bar r_t = m$ & $m+1$ \\
$\xi^1_t$ & 0 & 1 & 1 \\
$\xi^I_t$ & 0 & 1 & 0
\end{tabular}
\end{center}

\noindent
In $\xi^1_t$ a birth can occur at $m-1$ that will not happen at $\xi^I_t$.
Somewhat surprisingly the coupling can be fixed by looking at the leftmost and rightmost particles that can give birth:
\begin{align*}
\tilde r_t &= \max\{ x: \xi^{(-\infty,b]}_t(y) = 1 \hbox{ for all $y \in [x-k-1,x]$} \} \\
\tilde \ell_t &= \min\{ x: \xi^{[a,-\infty)}_t(z) = 1 \hbox{ for all $z \in [x,x+k-1]$} \}
\end{align*}
For then we have

\begin{lemma} \label{kcouple}
In the $k$-creation model if $I=[a,b]$ with $b-a \ge k-1$ then 
$$
\xi^I_t \cap [\tilde\ell_t, \tilde r_t]  =  \xi^1_t \cap [\tilde\ell_t, \tilde r_t] \quad\hbox{on $\zeta^I_t \neq\emptyset$}.
$$
\end{lemma}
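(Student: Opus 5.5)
We build all processes on one graphical representation: for each $x$ a Poisson process of death marks at rate 1, and, for each $x$ and each side $\pm$, a Poisson process of birth arrows at rate $\lambda/2$. A birth arrow at $x$ from the right fills $x$ if $x+1,\ldots,x+k$ are all occupied, and symmetrically from the left. The system is attractive, so $A\subset B$ implies $\xi^A_t\subset\xi^B_t$ for all $t$. Since $I\subset(-\infty,b]\subset\ZZ$ and $I\subset[a,\infty)\subset\ZZ$, this gives
$$
\xi^I_t\subset \xi^{(-\infty,b]}_t\cap\xi^{[a,\infty)}_t\subset \xi^1_t .
$$
It therefore suffices to prove the reverse inclusion: $\xi^1_t\cap[\tilde\ell_t,\tilde r_t]\subset\xi^I_t$ on $\{\zeta^I_t\neq\emptyset\}$. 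We read the definition of $\tilde r_t$ as $\max\{x:[x-k+1,x]\subset\xi^{(-\infty,b]}_t\}$, i.e.\ the rightmost site that can give birth to its left, and $\tilde\ell_t$ symmetrically.

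The argument is by induction over the a.s.\ locally finite sequence of events of the graphical representation that affect a bounded window. We truncate to a large box and use finite speed of propagation, as usual. Write $J_t=[\tilde\ell_t,\tilde r_t]$. The invariant to maintain is
$$
(\ast)\qquad \xi^I_t\cap J_t=\xi^1_t\cap J_t, \quad\text{which we may also write as}\quad \xi^I_t\cap J_t=\xi^{(-\infty,b]}_t\cap\xi^{[a,\infty)}_t\cap J_t .
$$
At time $0$, $\tilde r_0=b$ and $\tilde\ell_0=a$, so $J_0=I$ and $(\ast)$ holds. A useful observation is that on $\{\tilde\ell_t\le\tilde r_t\}$ the block $[\tilde r_t-k+1,\tilde r_t]$ lies in $\xi^{(-\infty,b]}_t$. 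Using the invariant together with an interlacing argument, we want to show it also lies in $\xi^I_t$. This is where the condition $\zeta^I_t\neq\emptyset$ enters: we claim that $\zeta^I_t\neq\emptyset$ forces $\tilde\ell_t\le\tilde r_t$ with the extreme blocks of $\xi^I_t$ being exactly those defining $\tilde r_t$ and $\tilde\ell_t$. To justify the claim, note that $\xi^I_t\subset\xi^{(-\infty,b]}_t$, so any birth-capable block of $\xi^I_t$ lies at or left of $\tilde r_t$. Conversely, the rightmost block of $\xi^{(-\infty,b]}_t$ is the one descended from the right end of $I$, because nothing to the right of $b$ was ever occupied.

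We then check $(\ast)$ event by event.

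\emph{Death at $x$.} The site is removed from all three processes simultaneously. This can shrink $J_t$ but never breaks $(\ast)$ inside the new, smaller interval.

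\emph{Birth at $x\in J_t$.} The $k$ sites the birth needs are either inside $J_t$, where by $(\ast)$ they are occupied in $\xi^I$ exactly when they are occupied in $\xi^1$, or they overlap the boundary blocks. The boundary blocks are fully occupied in both processes by the previous paragraph, so the birth happens in $\xi^I$ if and only if it happens in $\xi^1$. This is precisely the step where the naive edge $\bar r_t$ failed in the example preceding the lemma. There a site inside $[\bar\ell_t,\bar r_t]$ could use particles to the right of $\bar r_t$ that are present in $\xi^1$ but not in $\xi^I$. With $\tilde r_t$, any site $x<\tilde r_t$ using particles to its right needs only $x+1,\ldots,x+k\le\tilde r_t+k-1$. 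Here one must check carefully that the required sites lie in $[x+1,x+k]$, and that when $x+k>\tilde r_t$ the birth from the right is impossible in $\xi^1$ as well as in $\xi^I$, or else that it creates a new block extending $\tilde r_t$. This is the delicate part.

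\emph{Births that move $\tilde r_t$ to the right.} Here $x=\tilde r_t+1$ is filled from the left using the block $[\tilde r_t-k+1,\tilde r_t]$, or a block to the right of the old $\tilde r_t$ gets completed. These events occur identically in $\xi^{(-\infty,b]}$ and in $\xi^I$, since the relevant sites are, by the invariant, the same in both. The new sites of $J_t$ are then occupied in $\xi^I$ if and only if they are occupied in $\xi^1$.

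The main obstacle is the boundary bookkeeping. We must show that when $\tilde r_t$ jumps right, because a block just outside $J_t$ becomes full, the newly included sites between the old and new $\tilde r_t$ agree in $\xi^I$ and $\xi^1$. Our approach is to prove that any full $k$-block of $\xi^{(-\infty,b]}_t$ to the right of the old $\tilde r$ cannot exist: it would contradict the maximality of $\tilde r$. Consequently $\tilde r$ only jumps right by single births at $\tilde r+1$, and each such birth uses only sites in $J_t$. We also have to compare $\xi^{(-\infty,b]}_t$ with $\xi^1_t$ on $J_t$, which is the same statement applied to the half-line processes. So the cleanest route is first to prove $\xi^{(-\infty,b]}_t\cap(-\infty,\tilde r_t]=\xi^1_t\cap(-\infty,\tilde r_t]$, then the mirror statement for $[a,\infty)$, and finally intersect the two using the same induction.
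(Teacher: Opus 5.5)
Your route (attractiveness for one inclusion, then an event-by-event induction on $(\ast)$) is the paper's route. The paper's sketch checks events inside $J_t$ and deaths in the boundary blocks, but it never discusses outward moves of the edges. You correctly flag that move as the main obstacle, but your resolution does not work, and $(\ast)$ by itself cannot be propagated.

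\textbf{The gap.} It is true that $\tilde r_t$ increases only through a birth at $\tilde r_t+1$ from the left. But the absence of full $k$-blocks to the right of $\tilde r_t$ does not exclude occupied runs of length $\le k-1$ there. After the birth, the edge jumps to the right end of the whole run through $\tilde r_t+1$. For $k=2$: a death at $b-1$ moves $\tilde r$ from $b$ to $b-2$ and leaves $b$ occupied outside $J$. A later birth at $b-1$ from the left sends $\tilde r$ back to $b$, so $b$ enters $J$ without being born. Such absorbed sites lie in $\xi^{(-\infty,b]}_t\subset\xi^1_t$, but $(\ast)$ says nothing about whether they lie in $\xi^I_t$. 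Your assertion that the relevant sites are ``by the invariant, the same in both'' fails for exactly these sites.

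Your justification that $\tilde r_t$ is also the right end of the rightmost block of $\xi^I_t$ (``nothing to the right of $b$ was ever occupied'', ``descended from the right end of $I$'') is also invalid. In fact $\xi^{(-\infty,b]}_t$ does move past $b$, and the model is not additive, so there is no ancestral path to appeal to.

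Finally, your two half-line identities are correct, and your induction does prove them. It works there precisely because the edge is computed from the \emph{smaller} process, so absorbed sites belong to the smaller process and hence to the larger one. But intersecting the two identities only gives $\xi^{(-\infty,b]}_t=\xi^{[a,\infty)}_t=\xi^1_t$ on $J_t$, which says nothing about $\xi^I_t$. An induction of $\xi^I$ against $\xi^{(-\infty,b]}\cap\xi^{[a,\infty)}$ on $J_t$ runs into the same jump problem.

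\textbf{The fix.} The missing idea is to run the half-line comparison with $\xi^I$ itself as the smaller process. Let $\ell^I_t$ be the left end of the leftmost full $k$-block of $\xi^I_t$ and $r^I_t$ the right end of the rightmost one. These are defined on $\{\zeta^I_t\neq\emptyset\}$, an event which, once it fails, fails forever. Then $\xi^I_t=\xi^{(-\infty,b]}_t$ on $[\ell^I_t,\infty)$, because:
\begin{itemize}
\item it holds at $t=0$, since $b-a\ge k-1$ gives $\ell^I_0=a$;
\item a birth from the left that reaches across $\ell^I_t$ targets a site of the leftmost block, which is occupied in both processes;
\item $\ell^I$ moves left only by a birth at $\ell^I_t-1$. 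That site is then occupied in both processes, and the sites absorbed form a run of $\xi^I_t$, hence also lie in $\xi^{(-\infty,b]}_t$.
\end{itemize}
Symmetrically, $\xi^I_t=\xi^{[a,\infty)}_t$ on $(-\infty,r^I_t]$.

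These identities pin down the edges on $\{\zeta^I_t\neq\emptyset\}$. Attractiveness gives $r^I_t\le\tilde r_t$. If $r^I_t<\tilde r_t$, then $[\tilde r_t-k+1,\tilde r_t]\subset[\ell^I_t,\infty)$ would be a full block of $\xi^I_t$ beyond $r^I_t$, which is impossible. Likewise $\ell^I_t=\tilde\ell_t$. Now combine $\xi^I_t=\xi^{(-\infty,b]}_t$ on $[\tilde\ell_t,\infty)$ with your identity $\xi^{(-\infty,b]}_t=\xi^1_t$ on $(-\infty,\tilde r_t]$ to get the lemma.
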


\noindent
We need the intersection on the left since we may not have $\xi^I_t=\xi^1_t$ in $[\bar \ell_t, \bar r_t] - [\tilde\ell_t, \tilde r_t]$. In this region we have 1's that belong to intervals of length $k-1$ and cannot reproduce. The next result shows that when $\lambda>\lambda_c$ these regions have width $o(t)$

\begin{lemma} \label{tildesp}
If $\lambda > \lambda_c$ then $\tilde r_t/t \to \alpha(\lambda)$ and $\tilde \ell_t/t \to -\alpha(\lambda)$.
\end{lemma}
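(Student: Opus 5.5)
Since $\tilde r_t \le \bar r_t$ always (the rightmost site of an occupied block of length $k$ is itself occupied) and Theorem \ref{edgesp} gives $\bar r_t/t \to \alpha(\lambda)$ a.s., it suffices to show $\liminf_{t\to\infty} \tilde r_t/t \ge \alpha(\lambda)$ a.s. By reflection symmetry the statement for $\tilde\ell_t$ follows in the same way. So the task is to show that the gap $\bar r_t - \tilde r_t$ is $o(t)$. The idea is that the configuration just behind the right edge cannot stay sparse for long: any $1$ at $\bar r_t$ either dies or lies in a region that was recently produced by births. Every birth creates a particle adjacent to a block of $k$ occupied sites, so right after a birth at the edge we have a block of length at least $k$, and hence $\tilde r_t$ is close to $\bar r_t$.

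I would argue in the following steps.

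\textbf{Step 1.} Fix $\epsilon>0$ and look at the process at times $t$ and $t+s$ for a small fixed $s$. Use the fact that $\bar r$ moves right only by births. At a birth at $\bar r+1$, the sites $\bar r-k+1,\dots,\bar r$ are all occupied, so at that instant $\tilde r \ge \bar r$.

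\textbf{Step 2.} Show that a block of $k$ occupied sites near the edge survives for a while. Each of its $k$ sites dies at rate $1$, so a block created at time $u$ is intact on $[u,u+1]$ with probability $e^{-k}$. More usefully, I would use Theorem \ref{domOP} (in the form of Propositions \ref{compOP0}, \ref{compOP2}). In the supercritical regime, comparison with oriented percolation at density $p$ close to $1$ gives the following. With probability at least $1-Ce^{-\gamma L}$, the space-time box of size $L\times L$ adjacent to the edge at time $t$ contains a good block at time $t+L$ whose $\zeta$-set (sites that can give birth) lies within distance $\epsilon L$ of the edge. Here I would run the argument used in the proof of Theorem \ref{edgesp}: the oriented percolation comparison shows that $\zeta_t$, and not only $\xi_t$, advances at linear speed, so the rightmost point of $\zeta^{(-\infty,b]}_t$ is at least $(\alpha-\epsilon)t$. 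Note that $\tilde r_t$ is exactly $\max \zeta_t$, up to the off-by-one in the definition.

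\textbf{Step 3, the main point.} Make precise that $\max\zeta_t$ has the same speed as $\bar r_t$. I would use a restart argument. Consider the first time $T$ after $t$ at which the edge process makes a birth, so that $\tilde r_T = \bar r_T - 1$. From that block of $k$ sites, run the process $\xi^{[\bar r_T-k+1,\bar r_T]}$. By monotonicity (the process is attractive), $\tilde r$ of the full process dominates the right edge of $\zeta$ for this restarted process. The restarted process survives with positive probability, and when it survives its $\zeta$-edge moves at speed at least $\alpha-\epsilon$ by Step 2. If it dies, it does so within an exponentially-tailed time, by \eqref{scexp1}, and we restart again. The loss in position is only the number of restarts times a bounded amount, and this is $O(\log t)$ with high probability. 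Combining this with Borel--Cantelli along integer times, and controlling the fluctuations between integer times by Poisson bounds, gives $\liminf \tilde r_t/t \ge \alpha-\epsilon$.

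I expect the hardest part to be the restart in Step 3, for two reasons. First, $\bar r$ and $\tilde r$ of the restarted processes must be related back to the half-line process, and this needs attractiveness together with the comparison of edges in Theorem \ref{edgesp}. Second, the speed of the $\zeta$-edge of a process started from a finite block, conditioned on survival, has to be compared with $\alpha(\lambda)$.
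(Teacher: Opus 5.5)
\textbf{There is a genuine gap.} Your whole argument rests on the claim, made in Step 2 and used again in Step 3, that the edge of $\zeta$ moves at speed at least $\alpha(\lambda)-\epsilon$. Nothing you cite delivers this.
\begin{itemize}
\item The block construction of Propositions \ref{compOP0} and \ref{compOP2} gives linear growth at some speed fixed by the block scales $a,b$. That speed is not identified with $\alpha(\lambda)=\lim \bar r_t/t$.
\item It also gives no control of where good blocks sit relative to $\bar r_t$ to precision $\epsilon t$.
\item The restart in Step 3 does not help. An argument like that for Lemma \ref{kcouple} shows the following: the process started at time $T$ from $I=[\bar r_T-k+1,\bar r_T]$ agrees with the process started at time $T$ from $(-\infty,\bar r_T]$ on $[\min\zeta^I_t,\infty)$, for as long as $\zeta^I_t\neq\emptyset$.
\item Consequently, the $\zeta$-edge of a surviving restart \emph{is} $\tilde r$ of a translated half-line process. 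Asking for its speed is asking for the lemma itself, which is exactly the difficulty you flag at the end.
\end{itemize}
A minor point: right after a birth at $\bar r+1$ one has $\tilde r=\bar r$, not $\bar r-1$.

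The missing idea is the one in your opening paragraph, which your steps never use: occupied sites strictly to the right of $\tilde r$ cannot reproduce, and nothing replenishes them, so they simply die.
\begin{itemize}
\item A birth at $x$ needs $x-k,\dots,x-1$ or $x+1,\dots,x+k$ occupied. So no birth at time $u$ lands at a site $>\tilde r_u+1$.
\item $\tilde r$ increases only through births at $\tilde r+1$. These occur at rate at most $\lambda/2$, and each raises $\tilde r$ by at most $k$.
\item Suppose $\tilde r_t\le(\alpha-2\epsilon)t$. Then with high probability $\tilde r$ stays below $\beta_t=(\alpha-2\epsilon)t+2k\lambda t^{1/2}$ during $[t,t+t^{1/2}]$.
\item The $O(t)$ occupied sites to the right of $\beta_t+1$ receive no births in that window, and they all die within time $O(\log t)$.
\item Hence $\bar r_{t+t^{1/2}}\le\beta_t+1$ with high probability. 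This is incompatible with $\bar r_u\ge(\alpha-\epsilon)u$ for large $u$, which follows from Theorem \ref{edgesp}.
\end{itemize}
To make this almost sure, apply Borel--Cantelli at the first bad time in each $[n,n+1]$, using the strong Markov property. This gives $\liminf \tilde r_t/t\ge\alpha(\lambda)$. This is essentially the paper's proof, and it needs no oriented percolation and no restarts.
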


\mn
Combining the last two lemmas leads easily to 

\begin{theorem} \label{cct}
If $\lambda > \lambda_c$ then
$$
\xi^A_t \To P(\tau^A<\infty) \, \delta_\emptyset
 + P(\tau^A=\infty)\, \xi^1_\infty.
$$
\end{theorem}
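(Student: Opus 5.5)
The plan follows the contact-process argument of Durrett (1990), with $[\bar\ell_t,\bar r_t]$ replaced by $[\tilde\ell_t,\tilde r_t]$. Since the $k$-creation process is attractive, $\xi^1_t$ decreases in distribution to the upper invariant measure $\xi^1_\infty$. Convergence in distribution on $\{0,1\}^\ZZ$ is determined by finite-dimensional events. It therefore suffices to show that for every finite $B\subset\ZZ$ and every configuration $\eta$ on $B$,
$$
P(\xi^A_t|_B=\eta) \to P(\tau^A<\infty)1_{(\eta\equiv 0)} + P(\tau^A=\infty)P(\xi^1_\infty|_B=\eta).
$$

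\textbf{Step 1: reduce to a good interval at a fixed time.} Let $\tau^A_\zeta$ be the first time $\zeta^A_t=\emptyset$. Once $\zeta$ is empty no births can occur, so the process then dies out; hence $\{\tau^A=\infty\}=\{\zeta^A_t\neq\emptyset\ \forall t\}$. I want to show that on $\{\tau^A=\infty\}$, with probability tending to one, there is a time $s$ at which $\xi^A_s$ contains an interval $I=[a,b]$ of length $b-a\ge k-1$ lying far from the rest of the configuration's influence. Concretely, fix $s$ large. Theorem \ref{supercr} (\ref{scexp1}) gives $P(s\le\tau^A<\infty)\le C_1e^{-\gamma_1 s}$, and on survival $\xi^A_s$ contains some $k$ consecutive occupied sites (else $\zeta^A_s=\emptyset$). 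Pick one such block $I\subset\xi^A_s$. By attractiveness, $\xi^I_{t}\subset \xi^A_{s+t}$ after restarting at time $s$ via the Markov property, and also $\xi^A_{s+t}\subset\xi^1_{s+t}$.

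\textbf{Step 2: sandwich.} On $\{\zeta^I_t\neq\emptyset\}$, Lemma \ref{kcouple} gives $\xi^I_t=\xi^1_t$ on $[\tilde\ell_t,\tilde r_t]$. Combined with $\xi^I_t\subset\xi^A_{s+t}\subset\xi^1_{s+t}$ (the last process being run from time $s$, which is still dominated by $\xi^1_t$ in the coupling starting from $\ZZ$ at time $s$), all three agree on $[\tilde\ell_t,\tilde r_t]$. By Lemma \ref{tildesp}, $\tilde\ell_t/t\to-\alpha$ and $\tilde r_t/t\to\alpha$ with $\alpha(\lambda)>0$ (Theorem \ref{edgesp}), shifted by the location of $I$. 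Hence $B\subset[\tilde\ell_t,\tilde r_t]$ for large $t$ a.s. on $\{\zeta^I_t\ne\emptyset\ \forall t\}$, and there $\xi^A_{s+t}|_B=\xi^1_{s+t}|_B$, whose law converges to that of $\xi^1_\infty|_B$.

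\textbf{Step 3: the main obstacle.} The difficulty is that $I$ alone may die even though $\xi^A$ survives. I will handle this by repeated trials. Let $\rho>0$ be the probability that $\zeta^{[1,k]}$ survives. If $I$ dies at a time $\sigma$, then (\ref{scexp1}) implies $\sigma$ has exponential tails. On $\{\tau^A=\infty\}$ we then pick a fresh block of $k$ occupied sites in $\xi^A_\sigma$ and restart; by the strong Markov property each attempt succeeds with probability at least $\rho$, independently. So with probability tending to one (as the number of attempts grows and $s$ grows), some attempt started before time $s'$ succeeds, with $s'$ fixed, and then Step 2 applies for $t\to\infty$. This is where care is needed. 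The new block is chosen from $\xi^A_\sigma$ at a stopping time, so I need the graphical-representation coupling of $\xi^I$ with $\xi^1$ after time $\sigma$. I also need uniformity of the convergence $\tilde r_t/t\to\alpha$ over the random starting position of $I$, which follows from translation invariance since the starting positions are tight. Assembling these pieces: $P(\xi^A_t|_B=\eta)$ equals $P(\tau^A<\infty,\eta\equiv0)+P(\tau^A=\infty)P(\xi^1_t|_B=\eta)+o(1)$, which proves the theorem.
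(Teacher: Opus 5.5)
Your argument works, and it finds the surviving seed differently from the paper. The paper works at one deterministic time. It uses Lemma \ref{P2.1} to choose $n_0$ with $P(\xi^{[-n_0,n_0]}_t\neq\emptyset\ \forall t)>1-\delta$. It then uses Lemma \ref{P2.2} ($|\zeta^A_t|\to\infty$ on survival) to get, with probability nearly $P^A(\Omega_\infty)$, many well-separated infectious sites at a fixed time $t$, one of which grows into a translate of $[-n_0,n_0]$ by time $t+1$. That seed survives with probability $>1-\delta$, and Lemma \ref{kcouple} finishes.

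You instead take minimal $k$-blocks, whose survival probability $\rho$ may be small, and restart at each failure time. By the strong Markov property, $P(\hbox{the first $m$ attempts are made and fail})\le(1-\rho)^m$. So on $\{\tau^A=\infty\}$ some attempt succeeds a.s., and it starts at an a.s.\ finite time.

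Your route needs nothing from Section \ref{sec:Prop3.2} beyond $\rho>0$. You do not even need \eqref{scexp1}, since a.s.\ finiteness of the successful start time is enough. Lemma \ref{kcouple} applies at stopping times because it is a pathwise statement about the graphical representation. You also avoid the paper's step of growing a block from one of many separated points, which as written with ${}_n\xi^0_1$ has to be reinterpreted for $k\ge 2$, since a singleton cannot give birth. The paper's version in exchange gives a bound at a deterministic time with a seed that survives with probability close to one, which fits the block construction.

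The one step you cannot leave to ``assembling'' is the product form. The sandwich only gives $P(\xi^A_u|_B=\eta,\ \tau^A=\infty)=P(\xi^1_u|_B=\eta,\ \tau^A=\infty)+o(1)$. Since $\xi^1_u$ and $\tau^A$ are built from the same Poisson processes, you need an asymptotic independence argument, which is short:

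Fix $s$ and let $\bar\xi_u$ be the process started from $\ZZ$ at time $s$. This is the restarted process you mention in Step 2, and it is independent of ${\cal F}_s$, the $\sigma$-field of the Poisson points up to time $s$. Since $\xi^1_u\subset\bar\xi_u$ and $\bar\xi_u$ has the law of $\xi^1_{u-s}$,
$$
P(\xi^1_u|_B\neq\bar\xi_u|_B)\le\sum_{x\in B}\bigl[P(x\in\xi^1_{u-s})-P(x\in\xi^1_u)\bigr]\to 0 \quad\hbox{as $u\to\infty$,}
$$
because $P(x\in\xi^1_t)$ is monotone, hence convergent, in $t$. Since $\{\tau^A>s\}\in{\cal F}_s$, independence gives
$$
\bigl|P(\xi^1_u|_B=\eta,\ \tau^A=\infty)-P(\tau^A>s)\,P(\xi^1_{u-s}|_B=\eta)\bigr|\le P(s<\tau^A<\infty)+P(\xi^1_u|_B\neq\bar\xi_u|_B).
$$
Now let $u\to\infty$, then $s\to\infty$. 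The paper's proof passes over the same point when it says ``convergence to $\xi^1_\infty$ holds on a set of probability $(1-\delta)^3P^A(\Omega_\infty)$''. With this step added, your proof is complete to the extent that Lemmas \ref{kcouple} and \ref{tildesp} are.
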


The remainder of the paper is devoted to proofs. In Section \ref{sec:prelim} we construct the processes from a family of Poisson processes and we introduce some general results that will be needed in the proof: a zero-one law for the tail $\sigma$-field of the Poisson processes, and two results that allow us to conclude the processes have positive correlations. In Section \ref{sec:Prop3.2}, we prove Proposition \ref{sandt} which is the key to Liggett's (1999) version of the block construction of Bezuidenhout and Grimmett (1990). In Section 4 we use the block construction to prove Theorems \ref{crdies} and \ref{supercr}. In Section \ref{sec:Th456} we sketch the proof of Durrett's (1980) result (Theorem \ref{edgesp}) about the right edge and use the ideas to prove Theorems \ref{zeroatcr} and \ref{expsub}. In Section \ref{sec:Th456}, we introduce the right-most infectious site in the $k$-creation model, and use it to prove the complete convergence theorem via a combination of a limit theorem for $\tilde r_t/t$ (Theorem \ref{tildesp}) a coupling (Theorem \ref{kcouple}), and a consequence of the block construction. The proofs in this paper are old fashioned and restricted to one dimension, but the results are a rare example of a thorough analysis of an interacting particle system that is neither reversible reversible nor an additive process. 

\clearp

\section{Preliminaries} \label{sec:prelim}

Our first step is to construct the $k$-creation process from three families of Poisson processes

\begin{itemize}

\item
For $x \in \ZZ$, let $\{ D^x_n, n \ge 1\}$ be Poisson processes with rate 1.  At the arrival times of these Poisson processes we write a $\bullet$ at $x$ that kills any partilce on the site.

\item
For $x \in \ZZ$, let $\{B^{x,+}_n, n \ge 1 \}$ and let $\{B^{x,-}_n, n \ge 1 \}$ be Poisson processes with rate $\lambda/2$. At the arrival times of $B^{x,+}_n$ we draw an arrow from $x+k$ to $x$ to indicate that there will be a birth at $x$ if $x+1. \ldots x+k$ are all occupied and $x$ is vacant. At the arrival times of $B^{x,-}_n$ we draw an arrow from $x-k$ to $x$ to indicate that there will be a birth at $x$ if $x-1. \ldots x-k$ are all occupied and $x$ is vacant.

\end{itemize}

\noindent
These Poisson processes allow us to construct all of the processes $\xi^A_t$ starting from $\xi^A_0=A$ on the same space $\Omega$ so that if $A \subset B$ then $\xi^A_t \subset \xi^B_t$, which shows that they are {\bf attractive}.

\bn
{\bf Zero-one law}

\mn
Let ${\cal F}'_n$ be the $\sigma$-field generated by the points $(x,t)$ outside of $-n \le x \le n$, $t\le n$. ${\cal T} = \cap_{n\ge 1} {\cal F}'_n$ be the tail $\sigma$-field. Since the Poisson processes are independent, it is a standard fact that

\begin{lemma} \label{trivial}
${\cal T}$ is trivial.
\end{lemma}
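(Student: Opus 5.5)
The plan is to deduce the lemma from the Kolmogorov zero-one law. The first step is to reorganize the Poisson processes into independent pieces. Divide space-time $\ZZ\times[0,\infty)$ into the disjoint cells $\{x\}\times[m,m+1)$ with $x\in\ZZ$ and $m\ge 0$. Attach to each cell the random variable $Y_{x,m}$ that records all death marks $\bullet$ at $x$ and all arrows ending at $x$ (from both $B^{x,+}$ and $B^{x,-}$) with times in $[m,m+1)$. Because the families $D^x$, $B^{x,+}$, $B^{x,-}$ are independent Poisson processes, and a Poisson process has independent increments on disjoint time intervals, the $Y_{x,m}$ are mutually independent. Here a point process on a bounded interval is viewed as a random finite counting measure, a Polish-space-valued random variable. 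The $\sigma$-field generated by all the Poisson processes equals $\sigma(Y_{x,m}: x\in\ZZ, m\ge 0)$.

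Next I would identify ${\cal F}'_n$ with a $\sigma$-field generated by cells. Points $(x,t)$ outside the box $[-n,n]\times[0,n]$ lie in cells with $|x|>n$ or $m\ge n$. These cells do not meet the box, since the box uses the closed end $t\le n$ and cells are half-open $[m,m+1)$ with $m\ge n$. The only subtlety is a possible arrival exactly at time $n$, which has probability zero. So, up to null sets, ${\cal F}'_n = \sigma(Y_{x,m}: |x|>n \hbox{ or } m\ge n)$. Arrows are labeled by their target site $x$, and the source $x\pm k$ is determined by $x$, so "outside" is unambiguous. I will take this labeling as the convention; if the definition instead refers to the source, a shift by $k$ changes nothing.

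Now enumerate the countably many cells as $Y_1,Y_2,\ldots$. For each $n$, the complement of the set of cells generating ${\cal F}'_n$ is finite: it consists of the $(2n+1)n$ cells with $|x|\le n$, $m<n$. Hence every $A\in{\cal T}$ belongs to $\sigma(Y_j: j>N)$ for every $N$. For a given $N$, choose $n$ so large that the cells $Y_1,\ldots,Y_N$ all lie inside the box $[-n,n]\times[0,n)$. Then ${\cal F}'_n\subset \sigma(Y_j:j>N)$. So ${\cal T}$ is contained in the Kolmogorov tail $\sigma$-field of the independent sequence $(Y_j)$, which is trivial by Kolmogorov's zero-one law. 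Equivalently, any $A\in{\cal T}$ is independent of the $\pi$-system $\bigcup_N\sigma(Y_1,\dots,Y_N)$, hence of itself, so $P(A)=P(A)^2\in\{0,1\}$.

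There is no serious obstacle here. The only points needing care are measure-theoretic bookkeeping: verifying that the cell variables are independent (independent increments of Poisson processes), and noting that the definition of ${\cal F}'_n$ by points "outside" the box agrees with the cell $\sigma$-field modulo null sets. I would state both briefly and cite Kolmogorov's zero-one law.
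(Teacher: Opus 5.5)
Your proof is correct and is the same argument as the paper's, whose proof consists only of the hint to split the Poisson processes into independent pieces $Y_{m,n}$ (site $m$, times in $(n,n+1)$) and leaves the appeal to Kolmogorov's zero-one law implicit; you supply the bookkeeping that the paper omits. One small correction: the cells with $m=n$ do meet the box at time $n$, so your sentence saying they do not is inaccurate. This costs nothing, because you only need the inclusion ${\cal F}'_n\subset\sigma(Y_{x,m}: |x|>n \hbox{ or } m\ge n)$, which holds exactly, so no null-set argument is required.
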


\begin{proof} To make this easier to see let $Y_{m,n}$ be independent random variables that consist of the Poisson processes at $m \in \ZZ$ for times $t \in (n,n+1)$ for all nonnegative integers $n$. \end{proof}

\bn
{\bf  Positive correlations} 

\mn
In the theory of percolation a result first proved by Harris (1960) is very useful. The treatment here is based on pages 8--9 in Liggett (1999) and Section 2.2 in Grimmett (1991). Suppose we have independent random varaibles $X= \{X_1, \ldots X_n\}$ with $X_i\in \{0,1\}$.

\begin{theorem} \label{Harris60}
If $f$ and $g$ are increasing then
$$
E(f(X)g(X)) \ge Ef(X)Eg(X)
$$
\end{theorem}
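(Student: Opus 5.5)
I will prove Theorem \ref{Harris60} (the Harris inequality) by induction on $n$, the number of independent coordinates. Throughout, ``increasing'' means coordinatewise increasing on $\{0,1\}^n$. Since $f$ and $g$ take only finitely many values, all expectations are finite and no integrability issues arise.

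\textbf{Base case $n=1$.} Let $X_1$ be independent of an identically distributed copy $X_1'$. Because $f$ and $g$ are both increasing, $f(X_1)-f(X_1')$ and $g(X_1)-g(X_1')$ always have the same sign. Hence
$$
0 \le E\bigl[(f(X_1)-f(X_1'))(g(X_1)-g(X_1'))\bigr] = 2E(fg) - 2Ef\,Eg .
$$
Dividing by 2 gives $E(fg)\ge Ef\,Eg$.

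\textbf{Inductive step.} Assume the inequality holds for $n-1$ coordinates, and condition on $X_n$. Fix $X_n=x_n$. The functions $y\mapsto f(y,x_n)$ and $y\mapsto g(y,x_n)$ of $y=(X_1,\ldots,X_{n-1})$ are increasing. By independence, conditioning on $X_n$ does not change the law of $(X_1,\ldots,X_{n-1})$. The induction hypothesis therefore gives
$$
E\bigl(f(X)g(X)\mid X_n\bigr) \ge F(X_n)\,G(X_n),
$$
where $F(x_n)=E f(X_1,\ldots,X_{n-1},x_n)$ and $G(x_n)=E g(X_1,\ldots,X_{n-1},x_n)$.

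Since $f$ and $g$ are increasing in the last coordinate, $F$ and $G$ are increasing functions of the single variable $X_n$. The case $n=1$ then yields $E\bigl(F(X_n)G(X_n)\bigr)\ge EF(X_n)\,EG(X_n)$. Finally, $EF(X_n)=Ef(X)$ and $EG(X_n)=Eg(X)$. Taking expectations in the conditional inequality and chaining the two inequalities completes the induction.

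The only step needing care is the use of independence in the inductive step. It is what guarantees that, after conditioning on $X_n$, the remaining coordinates still have their product law, so the induction hypothesis applies and $F$ and $G$ are genuinely monotone in $x_n$.

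For the paper's later use, the result will need to be extended from finitely many $\{0,1\}$ variables to increasing events of the Poisson processes. This is done by discretizing space-time into finitely many cells, each recording whether an arrival occurs (oriented so that death marks count as decreasing), and then passing to the limit by martingale convergence. That extension belongs to the next results rather than to this statement.
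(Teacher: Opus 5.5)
Your proof is correct. It is essentially the standard induction-on-$n$ argument in Section 2.2 of Grimmett, which the paper cites in place of a proof of its own, with the one-variable case coming from $(f(X_1)-f(X_1'))(g(X_1)-g(X_1'))\ge 0$. The only difference is cosmetic: Grimmett conditions on the first $n-1$ coordinates and applies the one-variable case inside, whereas you condition on $X_n$ and apply the $(n-1)$-variable case inside, and the two orderings are equivalent.
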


\mn
If $f$ and $g$ are decreasing applying this result to $-f$ and $-g$ gives the same conclusion. Considering $f=1_A$ and $g=1_B$ we see that if both $A$ and $B$ are increasing or both are decreasing then
$$
P(A \cap B) \ge P(A)P(B)
$$
By discretizing time tand taking limits, this result can be extended to increasing or decreasing events of our Poisson construction. Another way to derive positive correlations for the $k$-creation model  is

\bn
{\bf Harris' (1977) theorem} 

\mn
Let $E$ be a finite set with a partial ordering $\le$. Call $f$ {\bf increasing} if $x<y$ implies $f(x) \le f(y)$ and let $F_{inc}$ be the set of all increasing functions. We say that $\mu$ has {\bf positive correlations} if $\mu(fg) \ge \mu(f)\mu(g)$ for all $f,g \in F_{inc}$. Let ${\cal M}_{pc}$ be the set of $\mu$ with positive correlations.
Let $X_t$, $t\ge 0$ be a Markov process with (right-continuous) step-function paths taking values in the finite state space $E$, a transition probability $p(t,x,y)$, and semigroup $T_tf(x) = \sum_y p(t,x,y) f(y)$. Call $X_t$ or $T_t$ {\bf monotone} if $T_tF_{inc} \subset F_{inc}$. Let $U_t\mu(y) =\sum_x \mu(x) p(t,x,y)$ be semigroup acting on measures.

\begin{theorem} \label{Harrispc}
Let $X_t$ be an attractive process in a finite partially ordered state space $E$.
In order that $U_t {\cal M}_{pc} \subset {\cal M}_{pc}$ for each $t>0$ it is necessary and sufficient that each jump is up or down in the partial order on $E$.
\end{theorem}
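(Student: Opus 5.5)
The plan is to work with the generator. Since $E$ is finite, $T_t = e^{tQ}$, where $Qf(x)=\sum_y q(x,y)(f(y)-f(x))$ and $q(x,y)$ is the jump rate from $x$ to $y$. Everything rests on the ``carr\'e du champ'' identity
$$
\Gamma(f,g)(x) := Q(fg)(x) - f(x)Qg(x) - g(x)Qf(x) = \sum_{y} q(x,y)\,(f(y)-f(x))(g(y)-g(x)),
$$
which is checked by expanding the product. If every jump is up or down, then for each $y$ with $q(x,y)>0$ the points $x,y$ are comparable. For $f,g\in F_{inc}$ the two differences $f(y)-f(x)$ and $g(y)-g(x)$ then have the same sign, so $\Gamma(f,g)\ge 0$ pointwise.

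\textbf{Sufficiency.} Fix $t>0$ and $f,g\in F_{inc}$, and set $\Phi(s) = T_s\bigl(T_{t-s}f\cdot T_{t-s}g\bigr)$ for $0\le s\le t$. Differentiating (legitimate because everything is a finite matrix exponential) gives
$$
\Phi'(s) = T_s\,\Gamma\bigl(T_{t-s}f,\,T_{t-s}g\bigr).
$$
By monotonicity (attractiveness), $T_{t-s}f$ and $T_{t-s}g$ are increasing, so $\Gamma(T_{t-s}f,T_{t-s}g)\ge 0$. Since $T_s$ is a positive operator, $\Phi'(s)\ge 0$, and hence
$$
T_t(fg)=\Phi(t)\ge\Phi(0)=T_tf\cdot T_tg .
$$
Now take $\mu\in{\cal M}_{pc}$. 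Then
$$
U_t\mu(fg)=\mu(T_t(fg))\ge \mu(T_tf\,T_tg)\ge \mu(T_tf)\,\mu(T_tg)=U_t\mu(f)\,U_t\mu(g).
$$
The last inequality uses positive correlations of $\mu$, applied to the increasing functions $T_tf$ and $T_tg$. Thus $U_t\mu\in{\cal M}_{pc}$.

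\textbf{Necessity.} Suppose $q(x,y)>0$ with $x$ and $y$ incomparable. The point mass $\delta_x$ trivially has positive correlations, since $\delta_x(fg)=f(x)g(x)$. Take the increasing indicators $f=1_{\{z:z\ge y\}}$ and $g=1_{\{z:z\ge x\}}$, so that $f(x)=0$, $f(y)=1$, $g(x)=1$, $g(y)=0$.

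Expanding in $t$, and using that the difference vanishes at $t=0$,
$$
T_t(fg)(x)-T_tf(x)T_tg(x) = t\,\Gamma(f,g)(x)+O(t^2).
$$
Every term of $\Gamma(f,g)(x)$ has the form $q(x,z)f(z)(g(z)-1)\le 0$, and the term $z=y$ equals $-q(x,y)<0$. Hence for small $t>0$ we get $U_t\delta_x(fg)<U_t\delta_x(f)U_t\delta_x(g)$, so $U_t$ fails to preserve ${\cal M}_{pc}$.

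The only step needing real care is the derivative computation for $\Phi$ in the sufficiency part. The crucial point is that monotonicity is exactly what allows $\Gamma\ge 0$ to be applied to $T_{t-s}f$ and $T_{t-s}g$ rather than just to $f$ and $g$. This is where attractiveness enters the argument.
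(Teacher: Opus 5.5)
The paper gives no proof of this theorem; it only quotes it from Harris (1977) as a tool for obtaining positive correlations. Your argument is correct, has no gaps, and is essentially the standard proof of Harris's theorem. Sufficiency comes from the interpolation $\Phi(s)=T_s(T_{t-s}f\cdot T_{t-s}g)$ with $\Phi'(s)=T_s\Gamma(T_{t-s}f,T_{t-s}g)\ge 0$, where monotonicity is used exactly to keep $T_{t-s}f$ and $T_{t-s}g$ increasing. Necessity comes from the first-order expansion at $\delta_x$ with $f=1_{\{z\ge y\}}$ and $g=1_{\{z\ge x\}}$.
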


\mn
The condition that each jump is up or down means that the stirring rate muct be 0.

\clearp

\section{The first step: Proof of Proposition \ref{sandt}} \label{sec:Prop3.2}

\mn
In this section we introduce the key idea due to Bezuidenhout and Groimmett (1990). To prove the result. we follow Section 2 in Part I of Liggett (1999) restricting to $d=1$ which simplifies the proof. We use $\xi$ instead of $A$ to denote the process, but otherwise the plan is the same as in Liggett's book. As announced on page 45 is

\begin{quote}
The first step is to show that if the contact process survives then the contact process restricted to a large space-time box $[-L,L]^d \times [0,T]$ has the property that there are many infected sites on various parts of the boundary with high probability if the initial configuration is large enough.
\end{quote}

To prepare for the proof for the $k$-creation process with $k \ge 2$, we will begin by describing Liggett's proof for the contact process in the case $d=1$.

\begin{lemma} \label{P2.1}
Suppose $\xi_t$ survives. Then
$$
\lim_{n\to\infty} P(|\xi_t^{[-n,n]}\neq\emptyset \hbox{ for all $t\ge 0$}) = 1
$$
\end{lemma}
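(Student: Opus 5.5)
The plan is Liggett's standard argument, which combines the zero-one law (Lemma \ref{trivial}) with positive correlations (Theorem \ref{Harris60}). Let $\Omega_\infty(A)=\{\xi^A_t\neq\emptyset \hbox{ for all } t\ge 0\}$. By attractiveness the events $\Omega_\infty([-n,n])$ increase in $n$, so the limit in the lemma exists and equals $P(G)$, where $G=\cup_n \Omega_\infty([-n,n])$. It therefore suffices to show $P(G)=1$. Since the process survives, there is a finite set $A$ with $P(\Omega_\infty(A))>0$; for the contact process a singleton suffices, and in the $k$-creation setting we would take $A=\{1,\ldots,k\}$. Translating, each $\Omega_\infty(A+x)$ has the same positive probability. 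Hence $P(G)\ge P(\Omega_\infty(A))>0$.

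The next step is to show that $G$ is a tail event. For fixed $m$, I claim that $G$ coincides almost surely with an event measurable with respect to ${\cal F}'_m$. Consider $\omega\in G$, with survival from $[-n,n]$. The process restricted to the space-time box $[-m,m]\times[0,m]$ involves only finitely many Poisson points. Modifying these points changes the configuration at time $m$, but by attractiveness survival from $[-n,n]$ should be implied by survival from the configuration at time $m$ obtained from the larger initial set $[-n-N,n+N]$.

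This step is the main obstacle: the Poisson marks inside the box can genuinely kill particles. The cleaner route avoids it as follows. If the process survives, its range is unbounded or it keeps particles forever. Because infection spreads at finite speed, survival from $[-n,n]$ forces the process eventually to escape any fixed box. So $G$ equals, up to a null set, the event that for some $n$ there is an infinite occupied path, built from marks outside $[-m,m]\times[0,m]$, that starts from a site of the form $(y,m)$ or $(\pm (m+1),s)$ reachable from $[-n,n]$. Survival is thus determined by the outside marks together with the choice of $n$, and since $n$ is arbitrary in the union, the inside marks do not matter. I would write this as: $G=\cup_n \{\hbox{survival from } [-n,n]\times\{m\} \hbox{ using marks outside the box}\}$. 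Each such event lies in ${\cal F}'_m$, because $[-n,n]$ is eventually occupied at time $m$ with positive conditional probability; making this precise requires a small conditioning argument.

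An alternative that bypasses the tail argument is to use positive correlations directly. The events $\Omega_\infty(A+jx_0)^c$ for $j=1,\ldots,J$, with disjoint translates inside $[-n,n]$, are all decreasing. By Theorem \ref{Harris60},
$$
P(\Omega_\infty([-n,n])^c)\le P\Bigl(\bigcap_j \Omega_\infty(A+jx_0)^c\Bigr).
$$
Harris' inequality bounds this intersection from below, not above, so positive correlations alone do not close the argument. I would therefore go back to the zero-one law. Once $G\in{\cal T}$ modulo null sets, Lemma \ref{trivial} gives $P(G)\in\{0,1\}$, and combined with $P(G)>0$ this yields $P(G)=1$, which proves the lemma.
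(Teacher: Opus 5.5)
\textbf{There is a genuine gap: the step showing that $G$ agrees almost surely with an event in ${\cal F}'_m$ is never established.} Your reduction is fine: by attractiveness $\lim_n P(\Omega_\infty([-n,n]))=P(G)$ with $G=\bigcup_n\Omega_\infty([-n,n])$, and $P(G)>0$. But the tail-measurability step carries all the content, and your arguments for it do not work.

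\begin{itemize}
\item Your first attempt uses attractiveness in the wrong direction. Survival from a larger set says nothing about survival from $[-n,n]$.
\item The ``infinite occupied path'' description is contact-process reasoning that relies on additivity. Additivity fails for $k\ge 2$.
\item The identity you finally write down, $G=H_m:=\bigcup_n\{\hbox{the process started at time $m$ from $[-n,n]$ survives}\}$ a.s., is true. Also $H_m\in{\cal F}'_m$, for the trivial reason that it uses only marks after time $m$. But your reason for the identity, that $[-n,n]$ is occupied at time $m$ with positive conditional probability, cannot prove it. At best it gives $P(G\mid{\cal F}'_m)>0$ on $H_m$, which is not the almost sure inclusion $H_m\subset G$ you need. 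For $n>m$ the claim can fail outright, since the marks at sites $m<|x|\le n$ before time $m$ are fixed by ${\cal F}'_m$.
\end{itemize}

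\textbf{The missing idea is time stationarity.}
\begin{itemize}
\item The inclusion $G\subset H_m$ is easy. On $\Omega_\infty([-n,n])$ the set $\xi^{[-n,n]}_m$ is finite, hence contained in some $[-n',n']$. Restart the graphical construction at time $m$ and use attractiveness: the process started from $[-n',n']$ at time $m$ survives.
\item The Poisson marks after time $m$ have the same law as those after time $0$, so $P(H_m)=P(G)$. Together with the inclusion, $P(H_m\setminus G)=0$.
\item Hence, for every $m$, $G$ is a.s.\ equal to an event independent of the marks in $[0,m]$. So $G$ is independent of the full $\sigma$-field, in particular of itself, and $P(G)\in\{0,1\}$. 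Equivalently, $\limsup_m H_m\in{\cal T}$ and Lemma \ref{trivial} applies.
\item Alternatively, $G$ is exactly invariant under spatial translations because the intervals are nested, so ergodicity of the spatial shift gives the same conclusion.
\end{itemize}

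\textbf{Comparison with the paper.} With this repair, your route genuinely differs from the paper's, which never works with $G$. The paper takes $E_m$ to be survival from the $m$-th of a sequence of disjoint $k$-blocks running off to infinity. It treats $\{E_m\hbox{ i.o.}\}$ as a tail event with $P(E_m\hbox{ i.o.})\ge\limsup_m P(E_m)>0$, so infinitely many blocks survive a.s. It then notes that survival of one block inside $[1,\ell k]$ gives survival from $[1,\ell k]$. Your event is anchored at the origin, so it is not tail-like on its face, which is exactly why the stationarity step is needed. In exchange, it avoids the finite-propagation-speed estimate implicit in calling $\{E_m\hbox{ i.o.}\}$ a tail event.
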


\mn
Let ${}_L\xi_t$ be the process restricted to $[-L,L]$. Let 
$$
\Omega_{\infty}
= \{\xi_t\neq\emptyset \hbox{ for all $t\ge 0$}\} \qquad
\Omega_0  = \{\xi_t =\emptyset \hbox{ for some $t$}\}
$$ 

\begin{lemma} \label{P2.2}
For every finite $A$ and every integer $N \ge 1$
$$
\lim_{t\to\infty}\lim_{L\to\infty} P( |{}_L\xi_t^A| \ge N)
= \lim_{t\to\infty} P( |\xi_t^A| \ge N)
=P^A(\Omega_{\infty})
$$
\end{lemma}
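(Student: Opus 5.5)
The plan is to prove the two equalities separately, starting with the inner limit in $L$. For fixed $t$ and finite $A$, the restricted process ${}_L\xi^A_t$ and the full process $\xi^A_t$ are built on the same Poisson processes. They can differ only if some influence from outside $[-L,L]$ reaches the space-time region relevant for $\xi^A$ by time $t$. The $k$-creation dynamics has finite range: a birth arrow spans distance $k$. So the set of sites that can affect $\xi^A_s$, $s\le t$, is contained in the set reachable from $A$ by chains of arrows, and this grows at most like a Poisson process with rate bounded by a multiple of $\lambda k$. A standard comparison with a Poisson process shows that the event that some arrow chain starting in $A$ exits $[-L,L]$ before time $t$ has probability tending to $0$ as $L\to\infty$. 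Off this event ${}_L\xi^A_s=\xi^A_s$ for all $s\le t$. Hence $P(|{}_L\xi^A_t|\ge N)\to P(|\xi^A_t|\ge N)$, and by monotonicity the limit in $L$ exists, since ${}_L\xi$ increases in $L$ by attractiveness.

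For the second equality, the upper bound is immediate: $\{|\xi^A_t|\ge N\}\subset\{\xi^A_t\neq\emptyset\}$, which decreases to $\Omega_\infty$, so $\limsup_t P(|\xi^A_t|\ge N)\le P^A(\Omega_\infty)$. The lower bound is the real content. I would show that $P(\Omega_\infty,\ |\xi^A_t|<N)\to 0$ as $t\to\infty$, via a ``finite configurations die with bounded-below probability'' argument. Any configuration $B$ with $|B|<N$ can be killed within one unit of time with probability at least $\delta_N=(e^{-1}(1-e^{-1}))^{N}e^{-\lambda N}>0$: each particle gets a death mark in $[s,s+1]$ before any birth arrow into the relevant sites. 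This follows since at most $N$ occupied sites, plus their $k$-neighbourhoods, matter. Let $G_t=\{|\xi^A_s|<N \text{ for some } s\ge t\}$. By the strong Markov property applied at successive times $s$ when $|\xi^A_s|<N$ (restarting after each unit-time window), Lévy's 0-1 law gives $P(\Omega_0\mid\mathcal F_s)\to 1_{\Omega_0}$. On the event that $|\xi^A_s|<N$ infinitely often, this conditional probability is at least $\delta_N$ infinitely often, so that event is contained in $\Omega_0$ a.s. Therefore $P(\Omega_\infty\cap\{|\xi^A_t|<N\})\le P(\Omega_\infty\cap G_t)\to0$.

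One point needs care when transferring the standard contact-process argument to $k\ge2$: the killing estimate must be uniform over all configurations with fewer than $N$ particles. I would handle this by noting that births only need to be prevented at the at most $2kN$ sites adjacent to the configuration, so the probability bound depends only on $N$, $k$ and $\lambda$. With that in hand, combining the two bounds gives $\lim_t P(|\xi^A_t|\ge N)=P^A(\Omega_\infty)$. The first equality then shows that the double limit agrees with this value.
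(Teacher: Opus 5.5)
Your proof is correct for the lemma as stated, and it has the same skeleton as the paper's proof. For the inner limit in $L$, your finite-speed argument just justifies the paper's one-line remark that the restricted processes increase to the full one. For the outer limit, both proofs use L\'evy's 0-1 law, $P(\Omega_0\mid{\cal F}_s)\to 1_{\Omega_0}$, together with a positive lower bound on the extinction probability from ``small'' states.

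The genuine difference is what counts as small. You measure size by $|\xi_s|$ and use a uniform killing bound over configurations with fewer than $N$ particles, which is Liggett's contact-process argument transferred verbatim. The paper deliberately replaces $\xi_t$ by the set $\zeta_t$ of infectious sites. Births occur at total rate at most $(\lambda/2)|\zeta_t|$, infectious sites die at rate $|\zeta_t|$, and once $\zeta_t=\emptyset$ no birth can ever occur again. The embedded jump chain therefore gives $P(\Omega_0\mid{\cal F}_s)\ge(1+\lambda/2)^{-|\zeta_s|}$, and hence $|\zeta_t|\to\infty$ on $\Omega_\infty$.

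That conclusion is stronger than yours; it implies yours because $\zeta_t\subset\xi_t$. It is also the form used later. In Proposition \ref{sandt} and Theorem \ref{cct} one needs many sites able to reproduce in order to seed translates of $[-n,n]$. For $k\ge 2$, a large $\xi_t$ made of isolated particles or blocks shorter than $k$ can never reproduce, so your version would not be enough there. Your killing argument adapts at once to the stronger form. On $\{|\zeta_s|<N\}$, require a death mark at each infectious site during $[s,s+1]$ and no birth arrows at the at most $3N$ sites within distance one of $\zeta_s$. Then $\zeta_{s+1}=\emptyset$, which is a trap. As written, however, your proof only yields $|\xi_t|\to\infty$.

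Two small repairs are also needed:
\begin{itemize}
\item Your explicit $\delta_N$ undercounts the birth clocks that must be suppressed. You need roughly three sites per particle, so a factor like $e^{-3\lambda N}$ rather than $e^{-\lambda N}$. As you note later, only positivity and dependence on $N,k,\lambda$ matter.
\item When you apply L\'evy's law on the event that $|\xi_s|<N$ for arbitrarily large $s$, pass to countably many times. You can use rational times, since paths are step functions, or the stopping times $\sigma_{j+1}=\inf\{s\ge\sigma_j+1:|\xi_s|<N\}$ with the strong Markov property.
\end{itemize}
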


\mn
Define $S(L,T) = \{ (x,t) \in \ZZ \times [0,T] : |x|=L \}$ and
$$
{}_L\xi^A = \cup_{t\ge 0} ( {}_L\xi^A_t \times \{t\}) \subset \ZZ \times [0,\infty)
$$
Let $N^A(L,T)$ be the maximum number of points in $S(L,T) \cap {}_L\xi^A$ with the following property: if $(x,s_1)$ and $(x,s_2)$ are $\in S(L,T)$ then $|s_1-s_2|\ge 1$.

\begin{lemma} \label{P2.8}
Suppose $L_j \uparrow \infty$ and $T_j \uparrow\infty$. For any $M,N$ and for any finite $A$
\begin{align*}
&\limsup_{j\to\infty} P(N^A(L_j,T_j) \le M) \cdot  P(|{}_{L_j}\xi^A_{T_j}| \le N) \\
&\le \limsup_{j\to\infty} P(N^A(L_j,T_j) \le M, |{}_{L_j}\xi^A_{T_j}| \le N)
\le P^A(\Omega_0)
\end{align*}
\end{lemma}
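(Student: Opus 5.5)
The first inequality is positive correlation; the second shows that a process which stays small inside the box and leaves few marks on its sides must eventually die.

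\textbf{First inequality.} Both events $\{N^A(L_j,T_j)\le M\}$ and $\{|{}_{L_j}\xi^A_{T_j}|\le N\}$ are decreasing functions of the birth arrivals and increasing functions of the death marks. After reversing the order on the death processes, both are decreasing events of the independent Poisson families. Discretize time into small intervals, apply Theorem \ref{Harris60} to the resulting finitely many Bernoulli variables, and pass to the limit as the paper indicates. This gives $P(\cdot\cap\cdot)\ge P(\cdot)P(\cdot)$ for each $j$, and hence for the limsup.

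\textbf{Second inequality: finite-energy set-up.} On the event $G_j=\{N^A(L_j,T_j)\le M,\ |{}_{L_j}\xi^A_{T_j}|\le N\}$, at most $N$ sites are occupied inside the box at time $T_j$. Since $N^A\le M$, only boundedly many "separated" boundary occupation times occur. If $(x,s)$ is a boundary occupation and no other occupation of $x$ is counted within time $1$ of $s$, the maximality in the definition of $N^A$ says every occupied time at $x=\pm L_j$ lies within distance $1$ of one of the at most $M$ chosen times. So the occupied space-time set on $S(L_j,T_j)$ is covered by at most $M$ time-intervals of length $2$ at each of the two sites $\pm L_j$.

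\textbf{Killing everything.} Consider the following bounded set of Poisson events. Deaths occur at all occupied sites of ${}_{L_j}\xi^A_{T_j}$ during $[T_j,T_j+1]$, with no births nearby. In addition, the boundary particles are dealt with locally: during each covering interval of length $2$ at $\pm L_j$, a death mark at the boundary site is placed shortly after each arrival, with no outward birth arrows toward $\pm(L_j+1)$ during these $\le 2M$ intervals of total length $\le 4M$.

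Particles outside $[-L_j,L_j]$ in the unrestricted process $\xi^A$ can only arise from births out of the boundary sites at those times, since outside births require occupied neighbors. If there are no outward arrows from the boundary during those periods, then $\xi^A$ agrees with ${}_{L_j}\xi^A$ up to time $T_j$. If all remaining particles are then killed, we get $\xi^A_{T_j+1}=\emptyset$.

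The probability that no outward birth arrows occur in total time $\le 4M$ is at least $e^{-4M\lambda}$. The probability of killing $\le N$ particles in unit time with no births is at least $c(N)>0$. For $k\ge2$, outward births need $k$ consecutive occupied sites ending at $\pm L_j$, which only makes the suppression easier. These events depend on Poisson points in regions disjoint from, or conditionally independent given, $\mathcal F_{T_j}$ restricted to the box. Outward arrows across the boundary are not used by the restricted process, so they are independent of $G_j$. This gives
$$P(G_j\cap\{\xi^A_{T_j+1}=\emptyset\})\ge \delta(M,N)\,P(G_j),\qquad \delta>0 .$$

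\textbf{Conclusion and main obstacle.} Hence $P(G_j)\le \delta^{-1}P(\xi^A_{T_j+1}=\emptyset)\to\delta^{-1}P^A(\Omega_0)$. This is not yet the sharp bound. To remove $\delta$, I will follow Liggett: apply the argument to the conditional probability of $\Omega_\infty$. By the martingale convergence theorem, $P(\Omega_0\mid\mathcal F_{t})\to 1_{\Omega_0}$, and on $G_j$ this conditional probability is at least $\delta$. So $\limsup P(G_j\cap\Omega_\infty)\le \limsup P(P(\Omega_0\mid\mathcal F_{T_j})\ge\delta,\ \Omega_\infty)=0$, which gives $\limsup P(G_j)\le P^A(\Omega_0)$.

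The main obstacle is making the conditional-independence claim rigorous. The outward arrows at the boundary times are not measurable with respect to the restricted process, but the boundary occupation times are random, so one needs a careful strong-Markov or conditioning argument on the Poisson points exterior to $[-L_j,L_j]$.
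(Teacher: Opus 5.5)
Your route is the paper's (and Liggett's): Harris' inequality for the first inequality, and for the second a finite-energy lower bound on the conditional probability of extinction on $G_j$, sharpened by L\'evy's 0-1 law. The step that does not work as written is the choice of $\sigma$-field in your last paragraph. You mention ``${\cal F}_{T_j}$ restricted to the box'' earlier, but you then run the martingale argument along the time filtration, $P(\Omega_0\mid{\cal F}_t)\to 1_{\Omega_0}$.

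If ${\cal F}_{T_j}$ contains all Poisson points up to time $T_j$, the claim that $P(\Omega_0\mid{\cal F}_{T_j})\ge\delta$ on $G_j$ need not hold. The event $G_j$ only constrains the restricted process. Given ${\cal F}_{T_j}$, you may already know that an arrow into $L_j+1$ or $-L_j-1$ fired while the boundary site was occupied. In that case the unrestricted process can have many particles outside $[-L_j,L_j]$ at time $T_j$, and there is no uniform lower bound on the extinction probability. Your bound $P(G_j\cap\{\xi^A_{T_j+1}=\emptyset\})\ge\delta P(G_j)$ is only an averaged statement, and it does not give the pointwise bound the martingale step needs.

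The fix, which is what the paper does, is to condition on ${\cal F}_{L_j,T_j}$, the $\sigma$-field generated by the Poisson points in $[-L_j,L_j]\times[0,T_j]$. Then:
\begin{itemize}
\item $G_j$ and the covering intervals at $\pm L_j$ are ${\cal F}_{L_j,T_j}$-measurable.
\item The only arrows that can let the unrestricted process leave the box are arrivals of $B^{L_j+1,-}$ and $B^{-L_j-1,+}$. These are indexed by the target site, which lies outside the box, so they are independent of ${\cal F}_{L_j,T_j}$.
\item Your killing step uses only points at times after $T_j$, which are also independent of ${\cal F}_{L_j,T_j}$.
\end{itemize}
So on $G_j$ you get directly $P(\Omega_0\mid{\cal F}_{L_j,T_j})\ge e^{-\lambda M}c(N)$. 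Here $N^A\le M$ gives at most $M$ intervals of length $2$ in total, not per side, each exposed to escape arrows at rate $\lambda/2$. The ``main obstacle'' you flag therefore disappears, and no strong Markov argument is needed.

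Because both $L_j\uparrow\infty$ and $T_j\uparrow\infty$, the $\sigma$-fields ${\cal F}_{L_j,T_j}$ increase to the full $\sigma$-field. Hence $P(\Omega_0\mid{\cal F}_{L_j,T_j})\to 1_{\Omega_0}$ almost surely, and your final display goes through with this $\sigma$-field. This is exactly why the lemma requires both parameters to grow.

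Finally, drop the instruction to place death marks at $\pm L_j$ ``shortly after each arrival.'' Those are points inside the box, fixed by the conditioning, and changing them would change $G_j$. They are also unnecessary: the absence of escape arrows during the covering intervals already forces $\xi^A={}_{L_j}\xi^A$ on $[0,T_j]$.
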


\mn
The final part of the argument is to use positive correlations to show that we can have the good events occur in one half of the space time set that we are free to choose.

\begin{lemma} \label{P2.6}
For every $n,N \ge 1$ and $L > n$ 
$$
 P( |{}_L\xi_t^{[-n,n]} \cap [0,L] |  \le N)
\le P( |{}_L\xi_t^{[-n,n]}|  \le 2N)^{1/2}
$$
\end{lemma}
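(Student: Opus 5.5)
The plan is to reproduce Liggett's square-root trick, which rests on positive correlations and the reflection symmetry of the process restricted to $[-L,L]$. Consider the two events
$$
A_+ = \{ |{}_L\xi_t^{[-n,n]} \cap [0,L]| \le N \}, \qquad
A_- = \{ |{}_L\xi_t^{[-n,n]} \cap [-L,0]| \le N \}.
$$
If $|{}_L\xi_t^{[-n,n]}| \le 2N$, that alone does not force both halves to have at most $N$ points. The useful inclusion goes the other way: on $A_+ \cap A_-$ we have $|{}_L\xi_t^{[-n,n]}| \le |\,\cdot \cap [0,L]| + |\,\cdot\cap[-L,0]| \le 2N$. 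Hence
$$
P(|{}_L\xi_t^{[-n,n]}| \le 2N) \ge P(A_+\cap A_-).
$$

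Next I would check that $A_+$ and $A_-$ are both decreasing events of the Poisson construction. This holds because the process is attractive and monotone in the graphical representation: adding birth arrows or removing death marks can only enlarge ${}_L\xi_t^{[-n,n]}$, and the cardinality of its intersection with a fixed set is increasing in the configuration. By the extension of Theorem \ref{Harris60} to the Poisson construction, obtained by discretizing time as described in Section \ref{sec:prelim}, this gives
$$
P(A_+\cap A_-) \ge P(A_+)P(A_-).
$$
Alternatively one could apply Theorem \ref{Harrispc} to the finite-state restricted process started from the deterministic (hence positively correlated) configuration $[-n,n]$, since there is no stirring.

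Finally, the birth and death rates of the $k$-creation process are invariant under $x \mapsto -x$. The two birth arrows are symmetric, and the initial set $[-n,n]$ and the box $[-L,L]$ are symmetric. So ${}_L\xi_t^{[-n,n]}$ has the same law as its reflection, and $P(A_-)=P(A_+)$. Combining the three steps gives $P(A_+)^2 \le P(|{}_L\xi_t^{[-n,n]}|\le 2N)$, and taking square roots yields the claim.

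I expect the main obstacle to be the rigorous justification of positive correlations for the continuous-time Poisson construction rather than for finitely many Bernoulli variables. I would handle it as suggested in Section \ref{sec:prelim}. First, restricting to $[-L,L]$ leaves only finitely many Poisson processes. Second, discretize $[0,t]$ into intervals of length $\delta$ and let the Bernoulli variables indicate whether a given mark occurs in a given interval. Third, apply Theorem \ref{Harris60}, checking that the discretized process is still monotone in these variables. Fourth, let $\delta\to 0$, using that the probability of two marks interacting within one interval tends to zero.
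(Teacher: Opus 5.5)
Your proof is correct and is essentially the paper's argument. You bound $P(|{}_L\xi_t^{[-n,n]}|\le 2N)$ below by the probability that both halves contain at most $N$ points, then apply positive correlations to these two decreasing events and use reflection symmetry; the paper leaves the symmetry step implicit, and its display has the typo $X_1\ge N$ for $X_1\le N$. The paper runs this argument for the infectious set $\zeta_t$ rather than $\xi_t$, and your argument carries over unchanged because $\zeta_t$ is an increasing, reflection-symmetric function of $\xi_t$.
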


\mn
Define the right side of $S(L,T)$ by  $S_+(L,T) = \{ (x,t) \in \ZZ \times [0,T] : x=L \}$ and
let $N^A_+(L,T)$ be the maximum number of points in $S_+(L,T) \cap {}_L\xi^A$ with the following property: if $(x,s_1)$ and $(x,s_2)$ are $\in S_+(L,T)$ then $|s_1-s_2|\ge 1$.

\begin{lemma} \label{P2.11}
For any $L$, $M$, $T$ and $n<L$
$$
[P( N^{[-n,n]}_+(L,T) \le M)]^2 \le P( N^{[-n,n]}(L,T) \le 2M)
$$
\end{lemma}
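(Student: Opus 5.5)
By the reflection symmetry $x \mapsto -x$ of the $k$-creation process, $[-n,n]$, and $[-L,L]$, I would use the square-root trick together with positive correlations, as in Lemma \ref{P2.6}. Let $S_-(L,T) = \{(x,t) : x=-L,\ 0\le t\le T\}$ be the left side of $S(L,T)$. Let $N^{[-n,n]}_-(L,T)$ be defined exactly like $N^{[-n,n]}_+(L,T)$, but with $S_-(L,T)$ in place of $S_+(L,T)$. Reflection symmetry then gives
$$
P( N^{[-n,n]}_-(L,T) \le M) = P( N^{[-n,n]}_+(L,T) \le M).
$$

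The next step is the set inclusion
$$
\{N^{[-n,n]}(L,T) \le 2M\} \subset \{N^{[-n,n]}_+(L,T) \le 2M\}\cap\{N^{[-n,n]}_-(L,T)\le 2M\},
$$
which does not quite give the claim with $M$. Instead I would argue as follows. Since $S(L,T)=S_+(L,T)\cup S_-(L,T)$ is a disjoint union, and the spacing constraint only involves points with the same $x$, a maximal admissible family on $S(L,T)$ is the union of maximal families on the two sides. Hence $N^{[-n,n]}(L,T) = N^{[-n,n]}_+(L,T)+N^{[-n,n]}_-(L,T)$. In particular,
$$
\{N_+ \le M\}\cap\{N_-\le M\} \subset \{N \le 2M\}.
$$

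Both events $\{N_\pm \le M\}$ are decreasing functions of the Poisson construction. Adding birth arrows or removing death marks enlarges ${}_L\xi^{[-n,n]}$ by attractiveness, so it can only increase $N_\pm$. Therefore Harris' inequality (Theorem \ref{Harris60}, extended to the Poisson construction by discretizing time as remarked after it) gives
$$
P(N_+\le M)\,P(N_-\le M) \le P(N_+\le M,\ N_-\le M) \le P(N\le 2M).
$$
Combined with the symmetry identity above, this yields $[P(N_+\le M)]^2 \le P(N\le 2M)$.

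The main point needing care is monotonicity. One must check that the restricted process ${}_L\xi^{[-n,n]}_t$ is an increasing function of the birth arrows and a decreasing function of the death marks. For the $k$-creation process this follows from the construction: a birth at $x$ requires the $k$ neighboring sites to be occupied, and that condition is monotone in the configuration. Restricting to $[-L,L]$ preserves this monotonicity. One also needs $N_\pm$, defined as maxima of admissible families of occupied boundary points, to be monotone in the occupied set. This is clear, because enlarging the set of occupied space-time points only enlarges the collection of admissible families.
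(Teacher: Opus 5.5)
Your proof is correct and is exactly the argument the paper has in mind when it says ``use positive correlations as in the previous proof''. Reflection symmetry gives $P(N_-\le M)=P(N_+\le M)$, the spacing constraint splits across the two sides so $\{N_+\le M\}\cap\{N_-\le M\}\subset\{N\le 2M\}$, and Harris' inequality for these decreasing events finishes the argument. The argument also goes through unchanged when the infectious set ${}_L\zeta$ replaces ${}_L\xi$ in the definition of $N$, as the paper does for $k\ge 2$, because $\zeta$ is an increasing function of $\xi$.
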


\mn
Define two families of events
\begin{align*}
&\hbox{top}(x)  = {}_{L+2n}\xi^{[-n,n]}_{T+1} \supset x+ [-n,n] \\
&\hbox{side}(y,t)  = {}_{L+2n}\xi^{[-n,n]}_{t+1} \supset y + [-n,n]
\end{align*}
The next result is Theorem 2.12 in Part I of Liggett (1999)

\begin{prop} \label{sandt}
If $\xi_t$ survives then for every $\ep >0$ there are choices of $n$, $L$, $t$ so that
\begin{align}
& P( \hbox{top}(x)\hbox{ for some $x \in [0,L]$} ) \ge 1-\ep 
\label{topE}\\
& P(\hbox{side}(L+n,t) \hbox{ for some $0 < t < T$}) \ge 1-\ep
\label{sideE}
\end{align}
\end{prop}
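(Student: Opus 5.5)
We follow the proof of Theorem 2.12 in Part I of Liggett (1999), assembling Lemmas \ref{P2.1}--\ref{P2.11}. The approach: make the inputs from Lemmas \ref{P2.6} and \ref{P2.11} small, then convert ``many occupied points'' into ``a translate of $[-n,n]$ is fully occupied'' using the finite energy of the Poisson construction.

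\textbf{Step 1 (choice of $n$ and $N$).} Fix $\ep>0$ and pick $\delta>0$ small, to be tuned at the end. By Lemma \ref{P2.1}, choose $n$ so that $P^{[-n,n]}(\Omega_0)\le\delta$. Next choose $N'$ so large that any set of $N'$ occupied sites in $[0,L]$, or any collection of $N'$ occupied space-time points on $S_+(L,T)$ separated by time $1$, makes the event top$(x)$ or side$(L+n,t)$ likely for some $x$ or $t$. The mechanism is as follows. An occupied site $y$ at time $s$ will, with probability at least $\eta=\eta(n,\lambda)>0$, be the seed of a fully occupied block $y'+[-n,n]$ at time $s+1$, using only Poisson points in a bounded space-time neighborhood. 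Extract from the $N'$ points a subfamily of order $N'/(2n+1)$ whose neighborhoods are disjoint; the seeding events for these are independent. Hence the probability that none succeeds is at most $(1-\eta)^{cN'}\le\delta$.

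\textbf{Step 2 (choice of $L$ and $T$).} By Lemma \ref{P2.2}, pick $T$ and then $L$ with $P(|{}_L\xi^{[-n,n]}_T|\le 2N')\le 2\delta$. Then Lemma \ref{P2.6} gives
$$P(|{}_L\xi^{[-n,n]}_T\cap[0,L]|\le N')\le (2\delta)^{1/2}.$$
For the side estimate, apply Lemma \ref{P2.8} along a sequence $L_j,T_j\uparrow\infty$ with $M=2N'$. Because $P^{[-n,n]}(\Omega_0)\le \delta$ and $\limsup_j P(|{}_{L_j}\xi_{T_j}|\le N')$ stays bounded away from $0$, the first inequality in Lemma \ref{P2.8} forces $P(N(L_j,T_j)\le 2N')$ to be small for large $j$. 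One must arrange that $P(|{}_{L_j}\xi_{T_j}|\le N')$ is also small rather than just bounded below; otherwise the product bound only gives something useful in the reverse direction. So, as in Liggett, we choose the pair $(L,T)$ so that both probabilities are small at once. This is done by first fixing $T$ large and letting $L\to\infty$, then adjusting $T$ using monotonicity of ${}_L\xi$ in $L$. Lemma \ref{P2.11} then gives $P(N_+(L,T)\le N')\le(\text{small})^{1/2}$.

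\textbf{Step 3 (conclusion).} Combining Steps 1 and 2 on the enlarged box of half-width $L+2n$ proves \eqref{topE} and \eqref{sideE} with $\ep\approx 2\delta^{1/4}$. The enlargement leaves room for the seeded blocks, which are centered at or beyond $L$. Attractiveness lets us run the seeded blocks inside the restricted process: restricting to $[-L-2n,L+2n]$ only removes particles, and the seed events are increasing. For \eqref{sideE}, reflection symmetry is used to place the block at $L+n$ exactly.

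The main obstacle is Step 1 for $k\ge2$. A single particle cannot reproduce, so its neighbors must already be occupied before it can seed anything. For the contact process this step is trivial; here the finite-energy seeding must instead start from a point of $\zeta$, an occupied $k$-run. We would therefore run the whole argument with counts of points of $\zeta_t$ in place of $|\xi_t|$. Lemmas \ref{P2.2}--\ref{P2.11} must then be checked for $\zeta$, using that $\zeta_t=\emptyset$ forever means the process dies.
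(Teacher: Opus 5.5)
\textbf{Gap in Step 2.} Your overall route is the paper's: Liggett's Theorem 2.12, with $\zeta_t$ in place of $\xi_t$. Your seeding from $k$-runs is in fact more careful than the paper's $P({}_n\xi^0_1\supset[-n,n])$, which vanishes for $k\ge 2$. But Step 2 has a genuine gap. The proposition needs one pair $(L,T)$ for which the top count and the side count are both large, and the mechanism you give does not produce it.

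With your choices $P^{[-n,n]}(\Omega_0)\le\delta$ and $P(|{}_L\xi_T|\le 2N')\le 2\delta$, the first inequality of Lemma \ref{P2.8} gives only $P(N(L,T)\le 2M)\le \delta/P(|{}_L\xi_T|\le 2N')$ in the limit. That ratio can be $\ge 1/2$, so it says nothing about the side. Your assertion that $P(|{}_{L_j}\xi_{T_j}|\le N')$ stays bounded away from $0$ along ``a sequence $L_j,T_j\uparrow\infty$'' is also not automatic. If $T_j$ grows slowly relative to $L_j$, this probability tends to $P(\Omega_0)$; if $T_j$ grows fast, it tends to $1$. ``Adjusting $T$ using monotonicity of ${}_L\xi$ in $L$'' does not resolve the conflict. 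For \eqref{topE} you want $P(|{}_L\zeta_T|\le 2N)$ small, while for Lemma \ref{P2.8} you need it not too small.

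\textbf{The missing idea: separation of scales plus exact calibration.} Choose $n$ with $P^{[-n,n]}(\Omega_0)<\delta^2$, not $\delta$. Then, for $L_j\uparrow\infty$, choose $T_j$ with
\[
P\bigl(|{}_{L_j}\zeta^{[-n,n]}_{T_j}|\le 2N\bigr)=\delta \quad\text{exactly.}
\]
This is possible by the intermediate value theorem. For fixed $L$, the map $T\mapsto P(|{}_L\zeta_T|\le 2N)$ is continuous (finite-state chain) and tends to $1$, because the process in a finite box dies out a.s. By Lemma \ref{P2.2}, this probability is $<\delta$ at any prescribed large $t_0$ once $L$ is large. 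Hence the crossing can be taken at some $T_j\ge t_0(j)\uparrow\infty$, as Lemma \ref{P2.8} requires.

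Along this calibrated sequence, Lemma \ref{P2.8} gives $\limsup_j P(N(L_j,T_j)\le 2M)\le \delta^2/\delta=\delta$. So for some $j$, the bounds $P(|{}_{L_j}\zeta_{T_j}|>2N)=1-\delta$ and $P(N(L_j,T_j)>2M)\ge 1-2\delta$ hold for the same pair. Lemmas \ref{P2.6} and \ref{P2.11} then give the half-box versions with error $O(\delta^{1/2})$, and your Step 1 seeding finishes the proof. This is exactly the display ``$P(|{}_{L_j}\xi^{[-n,n]}_{T_j}|>2N)=1-\delta$ for all $j$'' in the paper.

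Your final $\ep\approx 2\delta^{1/4}$ suggests you had in mind calibration at level $\delta^{1/2}$ with $P(\Omega_0)\le\delta$. That also works, but then Step 2 must require $P(|{}_L\zeta_T|\le 2N)=\delta^{1/2}$ exactly, not $\le 2\delta$.
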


Once Proposition \ref{sandt} is established  then the comparison with oriented percolation and the two corollaries follows exactly as in the argument for the contact process given in Liggett (1999). We describe the ideas involved in Section \ref{sec:Th123}.

\bn
{\bf Proofs of Lemmas \ref{P2.1} - \ref{P2.11} for the $k$-creation model.}

\mn
Here $\xi_t$ is the set of infected (or occupied) sites while $\zeta_t \subset \xi_t$ are the {\bf infectious sites} which are the $x \in \xi_t$ so that $\{x, x+1, \ldots, x+k-1\} \subset \xi_t$ or  $\{x, x-1, \ldots, x-(k-1) \} \subset \xi_t$. These are the sites that are capable of infecting other sites. Note that these sites can give birth onto vacant sites in at most one direction so the total birth rate is $\le (\lambda/2)| \zeta_t|$.

\begin{proof}[Proof of Lemma \ref{P2.1}]
Let $E_m$ be the event that the $k$-creation process starting from $\{(k-1)m+1, \ldots km\}$ survives. $\{E_m \hbox{ infinitely often}\} \in {\cal T}$, the tail $\sigma$-field defined in Section \ref{sec:prelim}, and its probability is lower bounded by $\limsup_{m\to\infty} P(E_m) >0$, so by Lemma \ref{trivial} 
$P(E_m\ { i.o.})=1$. Let $N_\ell$ be the number of events $E_1, \ldots E_\ell$ that occur. $N_\ell \to\infty$ with probability 1, so $P( N_\ell \ge 1) \to 1$. Since $N_\ell \ge 1$ implies
the process survives starting from $[1,\ell k]$ survives the desired result follows. 
\end{proof}

\mn
The infectious sites $\zeta_t$ will replace $\xi_t$ in the next four of Liggett's  lemmas (and of course also  in the associated definitions). Otherwise the proofs are almost identical to the ones in Liggett (1999).

\begin{proof}[Proof of Lemma \ref{P2.2}]
Since $\zeta_t^A =\cup_L \, {}_L\zeta_t^A$ it follows that
$$
\lim_{L\to\infty} P( |{}_L\zeta_t^A| \ge N ) =  P( |\zeta_t^A| \ge N )
$$
In the $k$-creation model the total birth rate is $\le (\lambda/2) |\zeta^A_t|$ while the total death rate of infectious sites is $|\zeta^A_t|$. The probability a death is the next event to occur an infectious site is $\ge 1/(1+\lambda/2)$. Iterating and noting that once all the infectious sites are dead the process is doomed to die out.
\beq
P( \Omega_0|{\cal F}_s) 
\ge \left( \frac{1}{1+\lambda/} \right)^{|\zeta^A_s|}
\label{deadpr}
\eeq
The right hand side is very small, but it is positive, so L\'evy's 0-1  implies 
$P( \Omega_0 |{\cal F}_s) \to 1_{\Omega_0}$. This means that $P( \Omega_0 |{\cal F}_s) \to 0$ on $\Omega_\infty$ and it follows that $|\zeta^A_t| \to\infty$ on $\Omega_\infty$.
\end{proof}

\begin{proof}[Proof of Lemma \ref{P2.8}]
The first inequality follows from the fact that decreasing events are positively correlated.
The proof of the second is similar to the proof of Lemma \ref{P2.2} but has a new wrinkle that comes from the fact that on the side we have selected points that are separated by more than 1. To deal with that issue, suppose that the points on the side $x=L$ occur at times $s_1, s_2, \ldots s_\ell$ and let $I=\cup_{j=1}^\ell \{L\} \times (s_i-1,s_i+1)$ so all the points in $\{x=L\} \cap {}_L\zeta^A$ are contained in $I$. The set $I$ has Lebesgue measure $2\ell$ so 
$$
P(\hbox{ there is no birth to the right from $I$})  \le e^{-\lambda \ell}.
$$

Turning to the gaps between or above all the intervals in $I$, if the ``gap'' has length $u$ then the probability that either (i) there is no birth to the right or (ii) there is one but a recovery symbol comes before it occurs is
$$
e^{-u\lambda/2} + \int_0^u (\lambda/2) e^{-s\lambda/2} [1-e^{-s}] \, ds 
= 1 - \int_0^u (\lambda/2) e^{-(1+\lambda/2) s)} \, ds \ge \frac{1}{1+\lambda/2} ,
$$
so the probability of a birth to the right when $N^{[-n,n]}_x(L,T) = \ell$ is
$$
\le \left( \frac{ e^{-\lambda} }{1+\lambda/2 } \right)^\ell
$$

Let ${\cal F}_{L,T}$ be the $\sigma$-algebra generated by the Poisson processes in $[-L,L] \times [0,T]$. Applying the argument for Lemma \ref{P2.2} to the top of the region, we see that 
$$
P( \Omega_0|{\cal F}_s ) \ge \left(\frac{ e^{-\lambda}}{1+\lambda/2} \right)^k
$$
on $\{ N^A(L,T) + |{}_L \zeta^A_t| = k \}$, which proves the desired conclusion. 
\end{proof}

\begin{proof}[Proof of Lemma \ref{P2.6}]
Let $X_1= |{}_L\zeta_t^{[-n,n]} \cap [0,L)$, $X_2= |{}_L\zeta_t^{[-n,n]} \cap (-L,0]$.
$$
 P( |{}_L\zeta_t^{[-n,n]}|  \le 2N) \ge P( X_1 \le N, X_2 \le N) \ge P(X_1 \ge N)^2
$$
by positive correlations.
\end{proof}

\begin{proof}[Proof of Lemma \ref{P2.11}]
Use positive correlations as in the previous proof.
\end{proof}

\bn
{\bf For the $k$-creation model, Lemmas \ref{P2.1} - \ref{P2.11} imply Proposition \ref{sandt}}

\begin{proof} 
Once we substitute $\zeta_t$ for $\xi_t$ the proof is almost the same as that for the contact process given on pages 50-51 of Liggett (1999). To simplify, we will omit the details  of how to pick $\delta$ to end up with an error probability of exactly $\ep$. 
Given $\delta>0$ to be chosen later, we use Lemma \ref{P2.1} to choose $n$ so that
$$
P^{[-n,n]}(\Omega_\infty) > 1 -\delta^2
$$

Choose $N$ so large so that any $N$ points in $\ZZ$ will contain a subset of size $N'$ separated by a distance of $2n+1$, where $N'$ is such that in $N'$ independent trials with success probability $P({}_n \xi_1^0 \supset [-n,n])$ we will have at least one success with probability at least $1-\delta$. When it comes to $M$ it is enough to require that in $M$ independent trials with success probability $P({}_{[0,2n]} \xi_1^0 \supset 0,2n])$ we will have at least one success with probability at least $1-\delta$.

Using Lemma \ref{P2.2} and Lemma \ref{P2.8} with $M$ and $N$ replaced by  $2M$ and $2N$ we can find $L_j \uparrow \infty$ and $T_j \uparrow \infty$ so that
\begin{align*}
& P( |{}_{L_j} \xi^{[-n,n]}_{T_j}| > 2N ) = 1 -\delta \qquad\hbox{for all $j\ge 1$}\\ 
& P( N^{[-n,n]}(L_j,T_j) > 2M) > 1 -\delta \qquad\hbox{for some $j\ge 1$}
\end{align*}
Taking $L=L_j$ so that the second result holds, using Lemmas \ref{P2.6} and \ref{P2.11} and then taking $T=T_j$ we can improve these conclusions to 
\begin{align*}
& P( |{}_{L} \xi^{[-n,n]}_{T_j} \cap [0,L)| > N )\ge  1 -\delta^{1/2} \\ 
& P( N^{[-n,n]}(L,T) > M) > 1 -\delta^{1/2}
\end{align*}
Using the definitions of $M$ and $N$ with the fact that Poisson points in disjoint parts of the plane are independent proves the two conclusions in Proposition \ref{sandt} 
\end{proof}

\clearp

\section{Proofs of Theorems \ref{crdies} and \ref{supercr}} \label{sec:Th123}

This is easy ,once we transform Proposition \ref{sandt} into a more useful result.
The first step is to use Proposition \ref{sandt} twice to get a fully occupied copy of $[-n,n]$ displaced in space and time.
$$
\hbox{cover}(x,t) = {}_{2L-3n}\xi_t^{[-n,n]} \supset x +[-n,n]
$$

\begin{prop} \label{compOP0}
Suppose the conditions in Proposition \ref{sandt} hold. Then for every $\ep$ there are choices of $L$ and $T$ so that
$$
P( \hbox{cover}(x,t)  \hbox{ for some $x\in [L+n,2L+n]$, $t \in [T,2T]$} ) \ge 1-\ep
$$
\end{prop}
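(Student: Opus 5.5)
This is Liggett's Proposition 2.13 argument (Part I of Liggett (1999)), with $\zeta_t$ substituted for $\xi_t$. The idea is to chain the two events of Proposition \ref{sandt}: first a side event carries the occupied block out to $x \approx L+n$, then a top or side event restarted from there lands the block in the target region $[L+n,2L+n]\times[T,2T]$. We use the strong Markov property at the stopping time when the first event occurs, attractiveness (monotonicity of the construction), and independence of Poisson points in disjoint space-time regions. Fix $\ep$ and apply Proposition \ref{sandt} with $\ep/4$ (say) to get $n,L,T$ with \eqref{topE} and \eqref{sideE}.

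\emph{Step 1.} Suppose first that $\hbox{top}(x)$ occurs for some $x\in[0,L]$, so that at time $T+1$ the translate $x+[-n,n]$ is fully occupied. This happens with probability $\ge 1-\ep/4$. Let $\sigma$ be the first time the side event $\hbox{side}(L+n,t)$ occurs, when starting from $[-n,n]$. By \eqref{sideE}, $\sigma<T+1$ with probability $\ge 1-\ep/4$, and at that time $L+n+[-n,n]$ is fully occupied.

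\emph{Step 2.} Restart from the occupied block at time $\sigma$. By the strong Markov property and translation invariance, the process from $(L+n)+[-n,n]$, restricted to $(L+n)+[-L-2n,L+2n]$, again satisfies \eqref{topE}. So with conditional probability $\ge 1-\ep/4$, at time $\sigma+T+1$ there is a fully occupied block centred at some $y\in[L+n,2L+n]$.

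\emph{Step 3.} The landing time $\sigma+T+1$ lies in $[T+1,2T+2]$. To bring it into a fixed window $[T',2T']$, use reflection symmetry and the freedom, as in Liggett, to chain a top event and a side event in either order. Redefining $T'=T+1$ and enlarging the window slightly (replace $T$ by $T+1$ in the statement, i.e.\ choose $T$ in the conclusion as the old $T+1$) absorbs the discrepancy. If an exact window is needed, we can instead iterate: after landing at a time in $[T+1,2T+2]$, apply the top event a bounded number of extra times to steer the time coordinate, paying $\ep/4$ each time.

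\emph{Spatial constraint.} All the restricted processes live in $[-L-2n, 2L+3n]$. After recentring, this is contained in a window of half-width of order $2L-3n$ around the midpoint, matching ${}_{2L-3n}$ in the definition of cover up to relabelling $L$. Monotonicity of the construction ensures that the restricted process from the original start dominates the restarted restricted process. Summing the error terms gives $\le\ep$.

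The main obstacle is this bookkeeping of space and time windows. With sites that can give birth only in blocks of $k$, restriction to a box is still attractive: a fully occupied $[-n,n]$ with $2n+1\ge k$ contains infectious sites. So the only genuine issue is arranging the geometry so that the target box $[L+n,2L+n]\times[T,2T]$ is hit. I would handle this exactly as Liggett does, choosing the order of the top and side events so that the landing time falls in the window.
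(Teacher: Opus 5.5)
Your argument is correct and is the one the paper intends: Liggett's Proposition 2.20, which the paper cites as "use Proposition \ref{sandt} twice." You stop at the first time $\sigma\in[1,T+1)$ at which the side event fills $L+n+[-n,n]$, restart via the strong Markov property, attractiveness and monotonicity in the restriction region, land at some $y\in[L+n,2L+n]$ at time $\sigma+T+1\in[T+1,2T+2]$ via the top event, and relabel $T'=T+1$. The preliminary top event in Step 1 and the alternatives in Step 3 are unnecessary; in particular, extra top events would push the time out of the window.

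The one slip is the spatial bookkeeping. No recentring is possible, since the process starts from $[-n,n]$, and none is needed: both restricted processes live in $[-L-2n,2L+3n]\subset[-(2L+3n),2L+3n]$. So you have proved the statement with subscript $2L+3n$, which is what the $2L-3n$ in $\hbox{cover}(x,t)$ should read. As printed, no block $x+[-n,n]$ with $x>2L-4n$ could lie in the restricted process.
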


\mn
This is Proposition 2.20 in Liggett (1999). Two more modification give Proposition 2.23 which gives the comparison with oriented site percolation in a more convenient form. Taking advantage of a result of Liggett, Schonmann, and Stacey 1997) he is able to compared with the traditional process with independent sites which he calls $B_n$. He works on a slightly different graph but it is just a linear transformation of the usual one. 
$$
P( x \in B_{n+1} |B_0, B_1, \ldots B_n) = p 
\quad \hbox{ if $B_n \cap \{x-1,x\} \neq \emptyset$}
$$
and 0 otherwise . For simplicity we state the result only in $d=1$

\begin{prop} \label{compOP2}
Suppose the conditions in Proposition \ref{sandt} hold. For every $p<1$ there are choices of $N$, $a$, and $b$ so that the following holds. If the initial conditions $B_0$ and $A$ have the property that $j \in B_0$ implies
$$
A \supset x+[-n,n] \hbox{  for some $x\in [(4j-1)a,(4j+1)a]$}
$$
then $\xi^A_t$ and $B_k$ with parameter $p$ can be coupled so that $j\in B_k$ implies 
$\xi^A_t \supset x+[-n,n]$ 
$$
\hbox{ for some} \ (x,t) \in [(4j-2k-1)a,(4j+2k+1)] \times [5kb,(5k+1)b]
$$
\end{prop}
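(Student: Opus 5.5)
The plan is to follow Liggett's (1999) proof of his Proposition 2.23 (itself built on Proposition 2.20, here Proposition \ref{compOP0}). We build the coupling block by block by induction on $k$. Each block event is a ``good'' event defined in terms of the Poisson processes in a bounded space-time region. These events are $M$-dependent, and we then apply the Liggett--Schonmann--Stacey (1997) domination result to compare them with independent oriented site percolation $B_k$ with parameter $p$.

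\emph{Step 1: the basic block event.} Fix $\ep>0$, to be tied to $p$ at the end. Proposition \ref{compOP0} provides $L,T$ such that, starting from a fully occupied translate $x+[-n,n]$ at time $s$, with probability $\ge 1-\ep$ the restricted process (using only Poisson points in a box of width $O(L)$ and height $2T$) produces a fully occupied translate $x'+[-n,n]$ at some time in $[s+T,s+2T]$, with $x'-x\in[L+n,2L+n]$. By reflection symmetry of the $k$-creation model, the same holds with $x'-x\in[-(2L+n),-(L+n)]$.

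\emph{Step 2: steering the block.} A single step displaces the block by an amount in $[L+n,2L+n]$, but we need it to land back in a prescribed window $[(4j\pm 2\mp\ldots)a]$ after a prescribed number of time steps. Following Liggett, we chain a bounded number of such steps (for example one right step followed by one left step, or a right step when the current position is to the left of the target center and a left step otherwise). The right and left steps roughly cancel, so the block can be kept in a window of width $2a$ with $a\approx 2L+n$, and it advances in the spatial direction $\pm 2a$ per level. Because the time increments are random in $[T,2T]$, we also interleave steps so that the block arrives within the time window $[5kb,(5k+1)b]$ with $b$ a multiple of $T$. Attractiveness lets us discard extra particles and restart from exactly $x+[-n,n]$ at the stopping time. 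The strong Markov property at these stopping times shows that a chain of $m$ steps succeeds with probability $\ge 1-m\ep$.

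\emph{Step 3: percolation comparison.} Define $\eta(j,k)=1$ if the chained event started from the block at level $k$ in window $j$ succeeds in producing blocks in both windows $j$ and $j+1$ at level $k+1$ (matching the convention $x\in B_{k+1}$ if $B_k\cap\{x-1,x\}\ne\emptyset$), with the event measurable with respect to Poisson points in a box of bounded size. These events are finite-range dependent with marginal probability $\ge 1-C\ep$. By Liggett--Schonmann--Stacey, for $\ep$ small they dominate independent percolation with parameter $p$. Induction on $k$ then gives the stated coupling.

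The main obstacle is Step 2. In the $k$-creation model (unlike the contact process) a block is only useful if it is fully occupied, since scattered particles cannot reproduce. Every relay must therefore be started from a genuine copy of $[-n,n]$ rather than from an arbitrary infected site. In addition, the events must use only Poisson points inside the box, so that the finite-range dependence needed for Liggett--Schonmann--Stacey holds. Since the events \textbf{top} and \textbf{side} are already formulated with restricted processes and full intervals, this should go through as in Liggett, with only the bookkeeping of windows and times left to do.
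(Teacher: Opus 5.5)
Your proposal is correct and is essentially the paper's own route: the paper proves Proposition \ref{compOP2} only by pointing to Liggett's (1999) Propositions 2.20--2.23, i.e.\ steering repeated applications of Proposition \ref{compOP0} (already built from fully occupied blocks and restricted processes) into prescribed space-time boxes and then using Liggett--Schonmann--Stacey to dominate independent site percolation, which is what your Steps 1--3 do. The one loose point is the claim in Step 3 that $\eta(j,k)$ is measurable with respect to Poisson points in a fixed box: its starting block is selected by the earlier construction, so the finite-range dependence and the bound $1-C\ep$ have to be stated conditionally on the past, level by level, within the inductive coupling you describe.
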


\mn
{\bf Proposition \ref{compOP2} implies Theorem \ref{crdies}.} The idea is old and very simple. If Theorem \ref{compOP2} holds when $\lambda = \lambda_c$ then it holds with $1-\ep$ replaced by $1-2\ep$ for $\lambda'< \lambda_c$, which is a contradiction if $\ep$ is small enough so that there is percolation when $p=1-2\ep$. See pages 51--54 in Liggett (1999).

\mn
{\bf Theorem \ref{supercr} is proved in Liggett's Theorem 2.30}. Intuitively, given the comparisons between process of interest and oriented percolation, it suffices to prove the result for oriented percolation with $p$ close to 1, which is easy.

\clearp

\section{Proofs of Theorems \ref{edgesp}, \ref{zeroatcr}, and \ref{expsub}}
 \label{sec:Th456}

We will not give a complete proof of Theorem \ref{edgesp}. However, we will describe the main ideas becuase some of them are needed in the proof of the other two result.

\begin{proof} [Proof of Theorem \ref{edgesp}]
Using the notation of Durrett (1980), we let $R = (-\infty,0]$. To analyze the growth of $\bar r_t$, we will introduce a family of {\bf reset approximations}
$\xi^{R,M}_t$ which start from $\xi^{R,M}_0= (-\infty,0]$

\mn
(i) On the time intervals $[0,M), [M,2M), \ldots$ the process evolves according to the rules of the $k$-creation process.

\mn
(ii) At times $M, 2M, \ldots$ we reset to 1 the values at all sites to the left of the rightmost one.

\mn
Since the process is attractive we can define $\xi^R_t$ and $\xi^{R,M}_t$ on the same space so that  $\xi^R_t(x) \le \xi^{R,M}_t(x)$ for all $x$. If we let $\bar r^M_t =
\max\{ y : \xi^{R,M}_t(y) = 1\}$ then $\bar r_t \le \bar r^M_t$. The asymptotic behavior of
$\bar r^M_t$ is easy to determine because $\bar r^M_{kM} - \bar r^M_{(k-1)M}$, $k \ge 1$ are independent and identically distributed so if the mean is finite
$$
k^{-1}\bar r^M_{kM} \to E\bar r^M_M = E\bar r_M.
$$
An argument given on page 893 of Durrett (1980) bounds the movement at intermediate times, so that we can conclude 
$$
\limsup_{t\to\infty} t^{-1}\bar r^M_t \le  E\bar r_M/M
$$
Since $\bar r_t \le \bar r^M_t$ we can replace $\bar r^M_t$ by $\bar r_t$ in the last result. Taking the infimum over $M$ gives 
$$
\limsup_{t\to\infty} t^{-1}\bar r_t \le \inf_{M\ge 1} E\bar r_M/M \equiv \alpha
$$
If $\alpha=-\infty$ we are done, so we will suppose $\alpha> -\infty$. From the definition of $\alpha$ it is clear that $\alpha \le E\bar r_1 <\infty$. 

At this point some  will be reminded of the proof of the subadditive ergodic theorem in Section 6.4 of Durrett (2019). If not, the reader should become familiar with that proof since the proof in Durrett (1980) follows its outline. The next two steps are to show

\ms
(i) $\bar r_t \to \alpha$ in $L^1$

\ms 
(ii) $\liminf_{t\to\infty} t^{-1}\bar r_t \ge \alpha$

\mn
The first claim is easy enough to be a homework problem in the first graduate probability class, but the seond one requires a fair anount of work, so we leave it to the reader to find the details on pages 894--896 of Durrett (1980).
\end{proof}

\begin{proof} [Proof of Theorem \ref{zeroatcr}]
Suppose $\alpha(\lambda_c)>0$. This implies that $\bar r_t \to\infty$ as $t\to\infty$ and hence $\inf_t \bar r_t \equiv V > - \infty$ almost surely. This implies that there is a positive integer $v< \infty$ so that $P(V \ge -v)>0$. Applying this reasoning to $\bar\ell_t$ we see that $\sup_t \bar\ell_t =_d  - V$ so if we start from $[-2v,2v]$ occupied then 
$P( \bar\ell_t < \bar r_t \hbox{ for all $t\ge 0$})>0$ so the process survives with positive probability contradicting Theorem \ref{crdies}.

To prove the second conclusion, let $\alpha_M(\lambda) = E\bar r_M/M$ and
$a_m(\lambda) = \min\{ \alpha_M(\lambda) : 1 \le M \le m \}$. $a_m(\lambda)$ is continuous and decreases to $\alpha(\lambda)$, so $\alpha(\lambda)$ is upper semicontinuous. Since $\alpha(\lambda)$ nondecreasing it must be right continuous.
\end{proof}

\begin{proof}[Proof of Theorem \ref{expsub}]
If $\alpha(\lambda)<0$ then we can pick $M$ so that $E\bar r^M_M = -\mu<0$ and hence $E\bar\ell^M_M =\mu >0$. The growth of $\bar r^M_t$ is bounded above by a Poisson process with rate $\lambda/2$ so $E\exp(\theta \bar r^M_M) < \infty$ for small $\theta>0$. Standard large deviations results imply (see Section 2.7 in Durrett (2019)) that $P( \bar r^M_{kM} > - k\mu/2) \le \exp(-\kappa k)$. Since $\bar r_{kM} \le \bar r^M_{kM}$ and $\bar \ell_{kM} \ge \bar \ell^M_{kM}$ it follows that the process has died out when $\bar r^M_{kM} <\bar \ell_{kM}$ which completes the proof.
\end{proof}

\clearp

\section{Proofs of Lemmas \ref{kcouple} and \ref{tildesp} and Theorem \ref{cct}}
\label{sec:L67Th8}

\begin{proof}[Proof of Lemma \ref{kcouple}] The coupling holds at time 0, so we only have to show that discrepancies cannot be created before $\zeta^I_t$ dies out.. While $\zeta^I_t \neq \emptyset$. $\tilde\ell_t$ and $\tilde r_t$ are in $\zeta_t$ so births cannot occur into $[\tilde\ell_t + k,\tilde r_t -k]$ from outside. The next figure gives a picture of the situation siwth $k=3$

\begin{center}
\begin{tabular}{ccccccccccccccccc}
 $\xi^1_t$ & 0 & 1 & 1 & 1 & 0 & 1 & 0 & 1 & 1 & 1 & 1 & 1 & 0 & 1 & 1\\
 $\xi^I_t$  & 0 & 1 & 1 & 1 & 0 & 1 & 0 & 1 & 1 & 1 & 1 & 0 & 0 & 1 & 0\\
 & & $\tilde \ell_t$ & &  $\tilde \ell_t+2$ & & & & & $\tilde r_t-2$ & & $\tilde r_t$ 
& & & $\bar r_t$
\end{tabular}
\end{center}

\noindent
Deaths or births in $[\tilde\ell_t+k,\tilde r_t-k]$ cause the same changes in both processes, while deaths in $[\tilde\ell_t,\tilde\ell_t+k-1]$ or $[\tilde r_t-k+1,\tilde r_t]$ cause the boundaries to move inward. It is possible for $[\tilde\ell_t,\tilde\ell_t+k-1]$ and $[\tilde r_t-k+1,\tilde r_t]$ to overlap. When $k=3$ if $[\tilde t_t,\tilde r_t]$ consists of 3 to 5 consecutive 1s, a death can cause $\zeta^I_t$ to become $\emptyset$, but this does not contradict the coupling result.
\end{proof} 

\begin{proof}[Proof of Lemma \ref{tildesp}.] 
Recall that we suppose $\lambda >\lambda_c$. By symmetry, it suffices to prove the result for $\tilde r_t$. Since $\tilde r_t \le \bar r_t$ we
have $\limsup_{t\to\infty} \tilde r_t/t \le \alpha(\lambda)$. Suppose now that there is an $\ep>0$ and a sequence $t_n \to\infty$ with $t_{n+1}-t_n \ge t_n^{1/2}$ so that
$\tilde r(t_n) \le (\alpha(\lambda)-2\ep)t_n$ and $\bar r(t_n) \ge (\alpha(\lambda)-\ep)t_n$. 

The first step is to observe that with high probability 
$$
\tilde r(t_n+t_n^{1/2}) \le b_n \equiv (\alpha(\lambda)-2\ep)t_n + 2k \lambda t_n^{1/2}
$$
To see this, note that births at the right boundary $\tilde r(t)$ occur at rate $\lambda/2$. In the best case scenario for growth of the occupied sites  the line to the right of $\tilde r(t_n)$ consists of alternating vacant sites and occupied intervals of length $k-1$, but even in this situation if there are fewer than $\lambda n^{1/2}$ births (an event with high probability)
$$
\tilde r(t_n+t_n^{1/2}) - \tilde r(t_n) \le k \lambda n^{1/2}.
$$
The occupied sites to the right of $b_n$ cannot give birth during $[t_n,t_n+t_n^{1/2}]$. There are fewer than $\ep t_n$ of them so in time $t_n^{1/2}$ all of them will die. Thus at time $T_n = t_n + t_n^{1/2}$ we have $\bar r(T_n) \le b_n$. Since this holds for all $n$ it contradicts $\bar r_t/t \to \alpha(\lambda)$ and proves the desired result.
\end{proof}

\begin{proof}[Proof of Theorem \ref{cct}.] It is clear that $\xi^A_t \To \delta_\emptyset$ on $\{\tau^A<\infty\}$, so we only have to show that $\xi^A_t \To \xi^1_\infty$ on 
$\{\tau^A=\infty\}$. First use Lemma \ref{P2.1} to pick $n_0$ so that
$P( \xi^{[-n_0,n_0]|}_t \neq \emptyset$ for all $t\ge 0)> 1-\delta$.

Stealing a line from the end of the proof of Proposition 3.1; choose $N$ so large so that any $N$ points in $\ZZ$ will contain a subset of size $N'$ separated by a distance of $2n+1$, where $N'$ is such that in $N'$ independent trials with success probability $P({}_n \xi_1^0 \supset [-n,n])$ we will have at least one success with probability at least $1-\delta$. 

By the version of  Lemma \ref{P2.2} that was proved for the $k$-creation model 
$$
\lim_{t\to\infty} P(|\zeta^A_t| \ge N) = P^A(\Omega_\infty)
$$
Pick $t$ so that $P(|\zeta^A_t| \ge N)  \ge (1-\delta) P^A(\Omega_\infty)$.
Combining this result with the previous paragraph we see that with probability
$\ge (1-\delta)^2 P^A(\Omega_\infty)$ we have a translate of  $[-n_0,n_0]$ contained in $\xi^A_{t+1}$. By the coupling in Lemma \ref{kcouple}, we see that when the process starting from the translate of $[-n_0,n_0]$ lives forever the $\xi^A_t \To \xi^1_\infty$. 

Combining our estimates we see that convergence to $\xi^1_\infty$ holds on a set of probability $(1-\delta)^3  P^A(\Omega_\infty)$. Since $\delta$ is arbitrary the desired result follows.
\end{proof}

\clearp

\section*{References}

\mn
Bezuidenhout, C., and Gray, L. (1994)
Critical attractive processes.
{\it Ann. Probab.} 22, 1160--1194

\mn
Bezuidenhout, C., and Grimmett, R. (1990)
The critical contact process dies out.
{\it Ann. Probab.} 18, 1462--1482

\mn
Dickman, R., and Tom\'e, T. (1991)
First order phase transitions in a one-dimensional nonequilbrium model.
{\it Phys. Rev. A.} 44, 4833--4838

\mn
Durrett, R. (1980)
 On the growth of one dimensional contact processes. 
{\it Ann. Probab.} 8), 890--907

\mn
Durrett, R. (1984)
 Oriented percolation in two dimensions. Special Invited Paper.
{\it  Ann. Prob.} 12, 999--1040

\mn
Durrett, R. (2019)
{\it Probability: Theory and Examples.}
Cambridge U. Press

\mn
Durrett, R. (2026) 
Discontinuous phase transitions in the one-dimensional pair and triplet creation processes 
{\it arXiv}:2608.29878

\mn
Durrett, R. (2027)
Interacting Particle Systems: Ideas, Techniques, Applications
\url{https://sites.math.duke.edu/~rtd/PASTA/PASTAcon.html}

\mn
Gray, L. (1986) 
Duality for general attractive spin systems with applications in one dimension.
{\it Ann. Probab.} 14, 371--396

\mn
Griffeath, D. (1979)
{\it Additive and Cancellative Interacting Particle Systems.}
Springer Lecture Notes in Math 724

\mn
Grimmett, G. (1999) 
{\it Percolation.} Second Edition,
Springer, New York

\mn
Harris, T.E. (1976)
On a class of set-valued Markv processes.
{\it Ann. Probab.} 4, 175--194

\mn
Harris, T.E. (1977)
A correlation inequality for Markov processes on partially ordered spaces.
{\it Ann. Probab.} 5, 451--454

\mn
Liggett, T.M. (1999)
{\it Stochastic Intereacting Systems: Contact, Voter and Exclusion Processes.}
Springer-Verlag, New York

\mn
Liggett, T.M., Schonmann, R.H., and Stacey, A.M. (1997) 
Domination by product measures.
{\it Ann. Probab.} 25, 71–95

\end{document}